\newtheorem{theorem}{Theorem}
\newtheorem{lemma}[theorem]{Lemma}
\newtheorem{proposition}[theorem]{Proposition}
\newtheorem{conjecture}[theorem]{Conjecture}
\newtheorem{corollary}[theorem]{Corollary}
\newtheorem{definition}[theorem]{Definition}
\newtheorem{algorithm}[theorem]{Algorithm}
\newtheorem{remark}[theorem]{Remark}
\numberwithin{equation}{section}
\numberwithin{theorem}{section}
\newcommand{\abs}[1]{|#1|}
\newcommand{\set}[1]{\left\{#1\right\}}
\newcommand{\arrowsv}[0]{\overset{v}{\rightarrow}}
\newcommand{\arrowse}[0]{\overset{e}{\rightarrow}}
\newcommand{\mH}[0]{\mathcal{H}}
\newcommand{\mL}[0]{\mathcal{L}}
\newcommand{\mA}[0]{\mathcal{A}}
\newcommand{\mB}[0]{\mathcal{B}}
\newcommand{\mM}[0]{\mathcal{M}}
\DeclareMathOperator{\V}{V}
\DeclareMathOperator{\E}{E}
\DeclareMathOperator{\B}{B}
\begin{document}


\title{On the independence number of\\ $(3, 3)$-Ramsey graphs and the\\ Folkman number $F_e(3, 3; 4)$}

\author{
	Aleksandar Bikov\thanks{Corresponding author} \hfill Nedyalko Nenov\\
	\let\thefootnote\relax\footnote{Email addresses: \texttt{asbikov@fmi.uni-sofia.bg}, \texttt{nenov@fmi.uni-sofia.bg}}\\
	Faculty of Mathematics and Informatics\\
	Sofia University "St. Kliment Ohridski"\\
	5, James Bourchier Blvd.\\
	1164 Sofia, Bulgaria
}


\maketitle

\vspace{1em}

\begin{abstract}

The graph $G$ is called a $(3, 3)$-Ramsey graph if in every coloring of the edges of $G$ in two colors there is a monochromatic triangle. The minimum number of vertices of the $(3, 3)$-Ramsey graphs without 4-cliques is denoted by $F_e(3, 3; 4)$. The number $F_e(3, 3; 4)$ is referred to as the most wanted Folkman number. It is known that $20 \leq F_e(3, 3; 4) \leq 786$.

In this paper we prove that if $G$ is an $n$-vertex $(3, 3)$-Ramsey graph without 4-cliques, then $\alpha(G) \leq n - 16$, where $\alpha(G)$ denotes the independence number of $G$. Using the newly obtained bound on $\alpha(G)$ and complex computer calculations we obtain the new lower bound
$$F_e(3, 3; 4) \geq 21.$$
	
\noindent\bigskip\emph{Keywords: } $(3, 3)$-Ramsey graph, Folkman number, clique number, independence number
\end{abstract}

\vspace{2em}


\section{Introduction}

Only simple graphs are considered. Let $a_1, ..., a_s$ be positive integers. The symbol $G \arrowse (a_1, ..., a_s)$ ($G \arrowsv (a_1, ..., a_s)$) means that for every coloring of the edges (vertices) of the graph $G$ in $s$ colors there exist $i \in \{1, ..., s\}$ such that there is a monochromatic $a_i$-clique of color $i$. If $G \arrowse (3, 3)$ we say that $G$ is a $(3, 3)$-Ramsey graph.

Define:

$\mH_e(a_1, ..., a_s; q) = \{G : G \arrowse (a_1, ..., a_s) \mbox{ and } \omega(G) < q\}.$

$\mH_e(a_1, ..., a_s; q; n) = \{G : G \in \mH_e(a_1,  ..., a_s; q) \mbox{ and } |\V(G)| = n\}.$\\

The edge Folkman numbers $F_e(a_1, ..., a_s; q)$ are defined with the equality

$F_e(a_1, ..., a_s; q) = \min\{|\V(G)| : G \in \mH_e(a_1, ..., a_s; q)\},$

i.e. $F_e(a_1, ..., a_s; q)$ is the smallest positive integer $n$ for which $\mH_e(a_1, ..., a_s; q; n) \neq \emptyset$. These notations are first defined in \cite{LRU01}, where some important properties of the Folkman numbers are proved.\\

Folkman \cite{Fol70} proved in 1970 that

$\mH_e(a_1, a_2; q) \neq \emptyset \Leftrightarrow q \geq \max\{a_1, a_2\} + 1.$

Therefore, $F_e(3, 3; q)$ exists if and only if $q \geq 4$.

From $K_6 \arrowse (3, 3)$ it follows that $F_e(3, 3; q) = 6$ if $q \geq 7$. It is also known that

$F_e(3, 3; q) = \begin{cases}
		8, & \emph{if $q = 6$, \cite{Gra68}}\\
		15, & \emph{if $q = 5$, \cite{Nen81} and \cite{PRU99}}.
	\end{cases}$

The exact value of the number $F_e(3, 3; 4)$ is not yet computed. For now it is known that

$$20 \leq F_e(3, 3; 4) \leq 786, \cite{BN17} \cite{LRX14}.$$

In Table \ref{table: history of F_e(3, 3; 4)} are given the main stages in bounding $F_e(3, 3; 4)$

\begin{table}[h]
	\centering
	\resizebox{0.8\textwidth}{!}{
	\begin{tabular}{  c  c  l  }
		\hline
		\hline
		year	& lower/upper		& who/what													\\
				& bounds			& 															\\
		\hline
		1967	& any?				& Erd\H{o}s and Hajnal \cite{EH67}							\\
		1970	& exist				& Folkman \cite{Fol70}										\\
		1972	& \makebox[1em]{11} \textendash \makebox[5em]{} & Lin implicit in \cite{Lin72}, implied by $F_e(3, 3; 5) \geq 10$\\
		1975	& \makebox[1em]{} \textendash \makebox[5em]{ $10 \times 10^{10}?$}& Erd\H{o}s offers \$100 for proof \cite{Erd75}			\\
		1983	& \makebox[1em]{13} \textendash	\makebox[5em]{}			& implied by a result of Nenov \cite{Nen83}					\\
		1984	& \makebox[1em]{14} \textendash	\makebox[5em]{}			& implied by a result of Nenov \cite{Nen84}					\\
		1986	& \makebox[1em]{} \textendash \makebox[5em]{ $8 \times 10^{11}$} & Frankl and R\"odl \cite{FR86}							\\
		1988	& \makebox[1em]{} \textendash \makebox[5em]{ $3 \times 10^9$} & Spencer \cite{Spe88}										\\
		1999	& \makebox[1em]{16} \textendash\makebox[5em]{}				& Piwakowski, Radziszowski, and Urba\'{n}ski, implicit in \cite{PRU99}\\
		2007	& \makebox[1em]{19} \textendash	\makebox[5em]{}			& Radziszowski and Xu \cite{RX07}							\\
		2008	& \makebox[1em]{} \textendash \makebox[5em]{ 9697}			& Lu \cite{Lu08}											\\
		2008	& \makebox[1em]{} \textendash \makebox[5em]{ 941}				& Dudek and R\"odl \cite{DR08}								\\
		2012	& \makebox[1em]{} \textendash \makebox[5em]{ 100?}			& Graham offers \$100 for proof								\\
		2014	& \makebox[1em]{} \textendash \makebox[5em]{ 786}				& Lange, Radziszowski, and, Xu \cite{LRX14}					\\
		2017	& \makebox[1em]{20} \textendash	\makebox[5em]{}			& Bikov and Nenov \cite{BN17}								\\
		\hline
	\end{tabular}
	}
	\caption{History of the Folkman number $F_e(3, 3; 4)$ from \cite{KWR18}}
	\label{table: history of F_e(3, 3; 4)}
\end{table}

More information about the numbers $F_e(3, 3; q)$ can be found in \cite{Gra12}, \cite{KWR18}, \cite{LRX14} and \cite{Soi08}. As seen on Table \ref{table: history of F_e(3, 3; 4)}, the number $F_e(3, 3; 4)$ is very hard to bound and it is the most searched Folkman number. The reason for this is that we know very little about the graphs in $\mH_e(3, 3; 4)$.

In this work we give an upper bound on the independence number of the graphs in $\mH_e(3, 3; 4)$ by proving the following

\begin{theorem}
\label{theorem: alpha(G) leq n - 16}
Let $G \in \mH_e(3, 3; 4; n)$. Then
$$\alpha(G) \leq n - 16.$$
\end{theorem}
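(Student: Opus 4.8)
The plan is to argue by contradiction, using the standard reformulation that $G \arrowse (3, 3)$ holds if and only if $\E(G)$ cannot be partitioned into two triangle-free subgraphs. So I assume $\alpha(G) \geq n - 15$ and fix a maximum independent set $A$, writing $B = \V(G) \setminus A$, so that $|B| \leq 15$. Because $A$ is independent, every triangle of $G$ has at least two vertices in $B$; hence each triangle is either contained in $B$, or is formed by a single vertex $a \in A$ together with an edge $uw$ of the link $G[N(a)]$. The aim is to produce a $2$-coloring of $\E(G)$ with no monochromatic triangle, which contradicts $G \arrowse (3, 3)$.

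First I would color the edges lying inside $B$. Since $G[B]$ is $K_4$-free and $|B| \leq 15 < 20 \leq F_e(3, 3; 4)$ by the lower bound of \cite{BN17}, the graph $G[B]$ is not a $(3,3)$-Ramsey graph, so $\E(G[B])$ splits into two triangle-free graphs; fix such a coloring $\chi$. This destroys every triangle contained in $B$ in both colors, and simultaneously it induces a $2$-coloring on each link $L_a := G[N(a)]$. Each $L_a$ is triangle-free, since a triangle inside $N(a)$ together with $a$ would form a $K_4$, contradicting $\omega(G) \leq 3$.

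It then remains to color the edges between $A$ and $B$ so that no triangle $a, u, w$ with $a \in A$ and $uw \in \E(L_a)$ is monochromatic. The key simplification, coming from the independence of $A$, is that the edges incident to distinct vertices of $A$ are pairwise disjoint; hence the coloring can be extended one vertex of $A$ at a time. Concretely, it suffices that for each $a$ one can assign colors to the star edges $\{a u : u \in N(a)\}$ so that, whenever $uw \in \E(L_a)$ has color $i$, the edges $au$ and $aw$ are not both colored $i$. For a fixed $a$ this is a $2$-SAT-type feasibility condition on the triangle-free colored graph $L_a$; I will call the induced coloring \emph{extendable} when it holds.

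The crux, and the step I expect to be the main obstacle, is to choose $\chi$ so that the coloring it induces on every link $L_a$ is extendable at once. Note that if every $2$-coloring of every triangle-free graph were extendable, then any triangle-free partition of $G[B]$ would already finish the proof and would yield the stronger conclusion $\alpha(G) \leq n - 20$; since only $n - 16$ is claimed, extendability must genuinely fail for some colored triangle-free links, so the choice of $\chi$ matters and cannot be arbitrary. Pinning down the precise constant $16$ therefore amounts to understanding exactly which $2$-colorings of the small triangle-free links are non-extendable, and to proving that for $|B| \leq 15$ one can always find a triangle-free partition of $G[B]$ whose induced link-colorings avoid all of them. I anticipate that this requires a careful structural analysis of $2$-colored triangle-free graphs, very plausibly completed by a finite case check over the $K_4$-free graphs on at most $15$ vertices with computer assistance, rather than a short closed-form argument.
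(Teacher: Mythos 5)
Your set-up is the right one and in fact parallels the paper's: assume $\alpha(G) \geq n-15$, put $H = G - A$ with $|\V(H)| \leq 15$, note that every triangle of $G$ either lies in $H$ or consists of a vertex $a \in A$ together with an edge of the triangle-free link $G[N(a)]$, and try to build a triangle-free $2$-coloring of $\E(G)$ by first coloring $\E(H)$ and then extending over the pairwise disjoint stars at the vertices of $A$. But the argument stops exactly where the theorem actually lives: you explicitly leave open the existence of a coloring of $\E(H)$ whose induced link-colorings are \emph{simultaneously} extendable, and you only anticipate that some finite case analysis would settle it. As written this is a genuine gap, not a deferred routine verification, and the finite check you propose --- over all $K_4$-free graphs on at most $15$ vertices --- is not viable: that class is far too large for the required per-graph tests.

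The paper closes the gap with two ingredients your outline is missing. First, $H = G - A$ is not an arbitrary $K_4$-free graph: by Proposition 2.2 (equivalently Proposition 3.1 with $p = 0$), $K_1 + H \arrowse (3, 3)$, so $H$ must belong to the explicitly known set $\mL(15; 1)$ of $2081234$ graphs; this reduction is what makes an exhaustive computation possible at all. Second, the ``extendable over every link at once'' condition is packaged into a single arrowing question via the graph $\B(H)$ of Definition 4.1, obtained by attaching to $H$ one new independent vertex $u_i$ for each maximal triangle-free subset $M_i$ of $\V(H)$, with $N(u_i) = M_i$. Since each $N(a)$ is triangle-free, it extends to some $M_i$, and Lemma 4.2 shows that $G \arrowse (3, 3)$ forces $\B(H) \arrowse (3, 3)$; dually, a triangle-free coloring of $\E(\B(H))$ is precisely a coloring of $\E(H)$ that is extendable over every conceivable link simultaneously. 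The proof then consists of verifying, with a SAT solver, that $\B(H) \not\arrowse (3, 3)$ for every $H \in \mL(15; 1)$. Without these two reductions the decisive step of your plan cannot be carried out, so the constant $16$ remains unproved in your argument.
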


With the help of computer calculations and Theorem \ref{theorem: alpha(G) leq n - 16} we improve the main result $F_e(3, 3; 4) \geq 20$ from \cite{BN17} by proving:

\begin{theorem}
\label{theorem: F_e(3, 3; 4) geq 21}
$F_e(3, 3; 4) \geq 21$.
\end{theorem}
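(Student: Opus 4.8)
The plan is to derive the lower bound $F_e(3,3;4)\geq 21$ by combining Theorem~\ref{theorem: alpha(G) leq n - 16} with a computer-assisted search, working by contradiction. Suppose $F_e(3,3;4)\leq 20$; then there exists a graph $G\in\mH_e(3,3;4;n)$ with $n\leq 20$, and since the previous bound $F_e(3,3;4)\geq 20$ from \cite{BN17} is assumed, we may take $n=20$ exactly. Applying Theorem~\ref{theorem: alpha(G) leq n - 16} to this $G$ gives $\alpha(G)\leq n-16=4$. Thus the whole problem reduces to ruling out the existence of a $20$-vertex graph $G$ with $\omega(G)\leq 3$, $\alpha(G)\leq 4$, and $G\arrowse(3,3)$. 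The structural constraint $\alpha(G)\leq 4$ is the key leverage that makes an exhaustive computation feasible, since without it the space of $20$-vertex $K_4$-free graphs is astronomically large.

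The next step is to enumerate, up to isomorphism, all candidate graphs on $20$ vertices satisfying the necessary conditions $\omega(G)\leq 3$ and $\alpha(G)\leq 4$, and then test each for the arrowing property $G\arrowse(3,3)$. First I would use the Ramsey-type bound: a $K_4$-free graph with independence number at most $4$ has a bounded structure governed by the Ramsey number $R(4,5)=25$, so $20$-vertex examples sit just below this threshold and are relatively constrained. I would generate these graphs with a canonical-augmentation tool such as \texttt{nauty}, pruning aggressively using both clique and independence constraints at each extension step rather than filtering only at the end. For each surviving graph, the arrowing test $G\arrowse(3,3)$ amounts to checking that every $2$-coloring of the edges yields a monochromatic triangle, equivalently that $G$ cannot be decomposed as the edge-union of two triangle-free graphs; this is itself an NP-hard-style search but can be handled per-graph by a SAT solver or a dedicated backtracking coloring routine.

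The expected main obstacle is computational rather than conceptual: the number of $20$-vertex $K_4$-free graphs with $\alpha(G)\leq 4$, while finite and vastly smaller than the unrestricted count, may still be enormous, and the per-graph arrowing test is expensive. I would therefore interleave the generation and testing so that partial colorings found during generation can certify non-arrowing early, and I would exploit additional necessary conditions on any member of $\mH_e(3,3;4;20)$ — for instance lower bounds on the minimum degree and on the edge count forced by the arrowing property — to discard candidates before the full coloring search. The heart of the argument is verifying that \emph{no} candidate arrows, i.e.\ that $\mH_e(3,3;4;20)=\emptyset$; combined with the hypothesis $F_e(3,3;4)\geq 20$ this contradicts $F_e(3,3;4)\leq 20$ and hence establishes $F_e(3,3;4)\geq 21$.

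The reliability of the conclusion rests entirely on the completeness of the enumeration and the correctness of the arrowing test, so I would cross-validate by running two independent implementations of the coloring check and by confirming the graph counts against known Ramsey-graph data for $R(4,5)$ where the parameter ranges overlap.
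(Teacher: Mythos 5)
Your opening reduction matches the paper: assume a counterexample $G$ on $20$ vertices with $\omega(G)=3$, apply Theorem~\ref{theorem: alpha(G) leq n - 16} to get $\alpha(G)\leq 4$ (the paper additionally uses $R(4,4)=18$ to pin down $\alpha(G)=4$ and passes to a maximal $K_4$-free supergraph). But the core of your plan --- enumerate up to isomorphism \emph{all} $20$-vertex graphs with $\omega(G)\leq 3$ and $\alpha(G)\leq 4$ and then test each for $G\arrowse(3,3)$ --- is not feasible, and this is a genuine gap rather than a mere implementation detail. Twenty vertices is five below $R(4,5)=25$, where the class of $K_4$-free graphs with independence number at most $4$ is astronomically large; complete catalogues of such Ramsey graphs exist only at $24$ vertices (the paper cites this in its concluding remarks, and explicitly leaves $n=21,22,23$ open for Conjecture~\ref{conjecture: alpha(G) geq 5} precisely because those enumerations are out of reach). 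Aggressive pruning on clique and independence number alone does not change this; the constraint $\alpha(G)\leq 4$ makes the space finite but nowhere near searchable.

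The idea you are missing is the one that makes the computation possible: the arrowing property itself must be pushed down to much smaller graphs and used as a filter at every level. By Proposition~\ref{proposition: K_1 + H arrowse (3, 3)} and Proposition~\ref{proposition: H in mL(n - abs(A), p + 1)}, deleting a maximum independent set $A$ ($|A|=4$) from $G$ and coning off yields $H=G-A$ with $K_1+H\arrowse(3,3)$, i.e.\ $H\in\mL(16;1;\leq 4)$; iterating gives $12$-vertex graphs with $K_2+H\arrowse(3,3)$ and $8$-vertex graphs with $K_3+H\arrowse(3,3)$. The paper therefore builds the candidate set bottom-up: starting from the $8$-vertex level (where exhaustive generation is trivial), it repeatedly adds an independent set of size $4$ via Algorithm~\ref{algorithm: finding mL_(max)(n; p; s)}, discarding at each stage every graph $H$ for which $K_p+H\not\arrowse(3,3)$, together with a Sperner-graph reduction (Proposition~\ref{proposition: if G is a Sperner graph, then G is obtained by duplicating a vertex in H}), chromatic-number and minimum-degree constraints. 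It is this repeated arrowing filter that keeps the intermediate sets (about $5.4\times 10^8$ graphs at $12$ vertices and $3.9\times 10^9$ at $16$ vertices) within reach of a personal computer; a direct top-down enumeration at $20$ vertices, as you propose, would not terminate. Your SAT formulation of the arrowing test and your cross-validation remarks are sound, but they do not rescue the enumeration step.
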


\section{Some necessary properties of the graphs in $\mH_e(3, 3; q)$}

Many useful properties of the graphs in $\mH_e(3, 3; q)$ follow from the fact that homomorphism of graphs preserves the Ramsey properties. In our situation, this means
\begin{proposition}
\label{proposition: G' arrowse (3, 3)}
Let $G \overset{\phi}{\rightarrow} G'$ be a homomorphism and $G \arrowse (3, 3)$. Then, $G' \arrowse (3, 3)$.
\end{proposition}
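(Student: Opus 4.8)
The plan is to pull an arbitrary edge-coloring of $G'$ back to $G$ along $\phi$, invoke the Ramsey property of $G$ to extract a monochromatic triangle, and then push its image forward to obtain a monochromatic triangle in $G'$. The whole argument rests on the single defining feature of a homomorphism: it sends edges to edges.

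First I would fix an arbitrary $2$-coloring $\chi'$ of $\E(G')$ and use it to define an induced $2$-coloring $\chi$ of $\E(G)$ by setting $\chi(uv) = \chi'(\phi(u)\phi(v))$ for every edge $uv \in \E(G)$. I must check that this is well-defined. Since $\phi$ is a homomorphism, $uv \in \E(G)$ forces $\phi(u)\phi(v) \in \E(G')$; and because all graphs here are simple (no loops), the endpoints of an edge are distinct, so in particular $\phi(u) \neq \phi(v)$ and $\phi(u)\phi(v)$ is a genuine edge of $G'$ carrying a color under $\chi'$. Thus $\chi$ assigns a color to each edge of $G$.

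Next, since $G \arrowse (3, 3)$, the coloring $\chi$ must contain a monochromatic triangle, say on vertices $x, y, z$ with all three edges of some color $c$. I would then verify that the images $\phi(x), \phi(y), \phi(z)$ span a monochromatic triangle of the same color $c$ in $G'$. Indeed, the three pairs $\{x,y\}, \{y,z\}, \{x,z\}$ are edges of $G$, so their images are edges of $G'$; as noted above, each such edge has distinct endpoints, so $\phi(x), \phi(y), \phi(z)$ are three pairwise distinct, pairwise adjacent vertices and hence form a triangle. By the very definition of $\chi$, each edge of this triangle receives color $c$ under $\chi'$, so it is monochromatic. Since $\chi'$ was an arbitrary $2$-coloring of $\E(G')$, this yields $G' \arrowse (3, 3)$.

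The one step that genuinely needs care — and the only place where the obstacle lies — is confirming that a monochromatic triangle of $G$ does not collapse under $\phi$ but maps to an honest monochromatic triangle of $G'$. This is precisely where simplicity (the absence of loops) together with the edge-preserving property of $\phi$ is used to guarantee that the three images are distinct and pairwise joined by edges. Everything else is a routine transfer of colors through the definition of $\chi$.
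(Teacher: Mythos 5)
Your proposal is correct and uses essentially the same idea as the paper: pull the $2$-coloring back along $\phi$ and observe that monochromatic triangles transfer between $G$ and $G'$ because a homomorphism of simple graphs sends edges to edges (hence triangles to triangles). The paper merely phrases this contrapositively, starting from a triangle-free coloring of $G'$ and concluding that the induced coloring of $G$ is triangle-free; the content is identical.
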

\begin{proof}
Suppose the opposite is true and consider a 2-coloring of $\E(G')$ without monochromatic triangles. Define a 2-coloring of $\E(G)$ in the following way: the edge $[u, v]$ is colored in the same color as the edge $[\phi(u), \phi(v)]$. Clearly, this coloring of $\E(G)$ does not contain monochromatic triangles.
\end{proof}
In the general case, it is true that $G \arrowse (a_1, ..., a_s) \Rightarrow G' \arrowse (a_1, ..., a_s)$, and $G \arrowsv (a_1, ..., a_s) \Rightarrow G' \arrowsv (a_1, ..., a_s)$, which is proved in the same way.

Now consider the canonical homomorphism $G \overset{\phi}{\rightarrow} K_{\chi(G)}$. If $G \arrowse (3, 3)$, then $K_{\chi(G)} \arrowse (3, 3)$, and therefore
\begin{equation}
\label{equation: chi(G) geq 6}
\min\{\chi(G): G \in \mH_e(3, 3; q)\} \geq 6, \cite{Lin72}.
\end{equation}
For $q \geq 5$, the inequality (\ref{equation: chi(G) geq 6}) is exact. It is not known whether this inequality is exact in the case $q = 4$. This is a special case of a result of Lin \cite{Lin72}. In the general case, $G \arrowse (a_1, ..., a_s) \Rightarrow \chi(G) \geq R(a_1, ..., a_s)$, which is proved in the same way.

With $K_p + G$ we denote the graph obtained by connecting with an edge every vertex of $K_p$ to every vertex of $G$.

Further, we will need the following

\begin{proposition}
\label{proposition: K_1 + H arrowse (3, 3)}
Let $G \arrowse (3, 3)$, $A$ be an independent set of vertices of $G$, and $H = G - A$. Then, $K_1 + H \arrowse (3, 3)$.
\end{proposition}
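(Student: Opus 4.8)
The plan is to exhibit a homomorphism from $G$ to $K_1 + H$ and then invoke Proposition \ref{proposition: G' arrowse (3, 3)}. Let $w$ denote the apex vertex of the $K_1$ summand, so that $\V(K_1 + H) = \{w\} \cup \V(H)$ and $w$ is joined to every vertex of $H$. Define $\phi : \V(G) \to \V(K_1 + H)$ by setting $\phi(v) = v$ for each $v \in \V(H)$ and $\phi(a) = w$ for each $a \in A$; informally, $\phi$ fixes $H$ pointwise and collapses the whole independent set $A$ onto the single new vertex $w$.

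The first step is to verify that $\phi$ is a graph homomorphism, i.e.\ that every edge of $G$ is carried to an edge of $K_1 + H$. Since $H = G - A$, each edge $[x, y]$ of $G$ is of one of three types according to how its endpoints split between $A$ and $\V(H)$. If both endpoints lie in $H$, then $\phi$ fixes the edge and it remains an edge of the subgraph $H \subseteq K_1 + H$. If one endpoint lies in $A$ and the other in $H$, the edge is sent to an edge joining $w$ to a vertex of $H$, which is present precisely because $w$ is joined to all of $H$. The third type, an edge with both endpoints in $A$, cannot occur, since $A$ is independent.

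With the homomorphism in hand the conclusion is immediate: $G \arrowse (3, 3)$ holds by hypothesis, so Proposition \ref{proposition: G' arrowse (3, 3)} gives $K_1 + H \arrowse (3, 3)$.

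The entire argument rests on the homomorphism check, and its crux is the independence of $A$: this is exactly what guarantees that collapsing all of $A$ onto the one vertex $w$ never forces an edge of $G$ onto a loop at $w$, which would break the homomorphism. The fact that in $K_1 + H$ the vertex $w$ is joined to all of $H$ — and not merely to those vertices of $H$ having a neighbour in $A$ — causes no difficulty, because a homomorphism is only required to map edges to edges and the target graph is free to carry additional edges; the extra adjacencies of $w$ only make $K_1 + H$ a stronger Ramsey graph, which is precisely the direction we want.
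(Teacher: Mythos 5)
Your proof is correct and follows essentially the same route as the paper's: both define the map that fixes $H$ pointwise and collapses $A$ onto the apex vertex of $K_1$, observe that independence of $A$ makes this a homomorphism, and conclude via Proposition \ref{proposition: G' arrowse (3, 3)}. Your write-up merely spells out the edge-case verification that the paper leaves as ``clearly''.
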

\begin{proof}
Consider the mapping $G \overset{\phi}{\rightarrow} K_1 + H$:

$\phi(v) = \begin{cases}
		\V(K_1), & \emph{if $v \in A$}\\
		v, & \emph{if $v \in \V(H)$}.
	\end{cases}$\\	
It is clear that $\phi$ is a homomorphism, and according to Proposition \ref{proposition: G' arrowse (3, 3)}, $K_1 + H \arrowse (3, 3)$.
\end{proof}
The usefulness of Proposition \ref{proposition: K_1 + H arrowse (3, 3)} lies in the fact that the graph $G$ can be obtained by adding independent vertices to the smaller graph $H$. In the general case it is true that if $G \arrowse (a_1, ..., a_s)$, then $K_1 + H \arrowse (a_1, ..., a_s)$.

\begin{remark}
Other proof of Proposition \ref{proposition: K_1 + H arrowse (3, 3)} is given in the proof of Theorem \ref{theorem: F_e(3, 3; 4) geq 21} from \cite{RX07}. However, the proposition is not explicitly formulated.
\end{remark}

A topic of significant interest are homomorphisms in Proposition \ref{proposition: G' arrowse (3, 3)} which do not increase the clique number. They could be used to obtain non trivial results. For example, in \cite{HN79} a 20-vertex graph in $\mH_e(3, 3; 5)$ is constructed. Using a homomorphism, in the same work a 16-vertex graph in $\mH_e(3, 3; 5)$ is obtained from this graph. Thus, in 1978 the bound $F_e(3, 3; 5) \leq 16$ is proved, improving the previous result $F_e(3, 3; 5) \leq 18$ from 1973 \cite{Irw73}.

The graph $G$ is vertex-critical (edge-critical) in $\mH_e(3, 3; 4)$ if $G \in \mH_e(3, 3; 4)$ and $G - v \not\in \mH_e(3, 3; 4), \forall v \in \V(G)$ ($G - e \not\in \mH_e(3, 3; 4), \forall e \in \E(G)$). It is true that
\begin{equation}
\label{equation: delta(G) geq 8}
\min\{\delta(G): G \mbox{ is a vertex-critical graph in } \mH_e(3, 3; 4)\} \geq 8, \cite{Bik18}\cite{Bik16},
\end{equation}
where $\delta(G)$ is the minimum degree of $G$.
\begin{remark}
\label{remark: minimum degree of vertex-critical graphs}
In \cite{Bik18} and \cite{Bik16} (\ref{equation: delta(G) geq 8}) is formulated for edge-critical graphs without isolated vertices. The proof is obviously also true for vertex critical graphs, and therefore further we shall use (\ref{equation: delta(G) geq 8}).
\end{remark}

It is not known if the inequality (\ref{equation: delta(G) geq 8}) is exact.\\

\section{Auxiliary notation and propositions}

Let $G \in \mH_e(3, 3; 4)$, $A$ be an independent set of vertices of $G$, and $H_1 = G - A$. By Proposition \ref{proposition: K_1 + H arrowse (3, 3)}, $K_1 + H_1 \arrowse (3, 3)$. If $A_1$ is an independent set in $H_1$ and $H_2 = H_1 - A_1$, then $K_2 + H_2 \arrowse (3, 3)$. If $A_2$ is an independent set in $H_2$ and $H_3 = H_2 - A_2$, then $K_3 + H_3 \arrowse (3, 3)$, etc. This way, we obtain a sequence $G \supseteq H_1 \supseteq H_2 \supseteq H_3 \supseteq ...$, in which $\omega(H_i) \leq 3$ and $K_i + H_i \arrowse (3, 3)$. Further, in the proof of Theorem \ref{theorem: F_e(3, 3; 4) geq 21}, we will use such a sequence of graphs. Because of this, the following notations are convenient:\\

$\mL(n; p) = \set{G : \abs{\V(G)} = n, \omega(G) < 4 \mbox{ and } K_p + G \arrowse (3, 3)}$

$\mL(n; p; s) = \set{G \in \mL(n; p) : \alpha(G) = s}$\\

Obviously, $\mL(n; 0) = \mH_e(3, 3; 4; n)$. Let us note that

$\mL(n; 1) = \emptyset$, if $n \leq 13$, \cite{PRU99}.

$|\mL(14; 1)| = 153$, \cite{PRU99}.

$|\mL(15; 1)| = 2081234$, \cite{BN17}.

The graphs in $\mL(16; 1)$ are not known. In the proof of Theorem \ref{theorem: F_e(3, 3; 4) geq 21} we obtain some of the graphs in $\mL(16; 1)$. The graphs in $\mL(15; 1)$ will be used in the proofs of Theorem \ref{theorem: alpha(G) leq n - 16} and Theorem \ref{theorem: F_e(3, 3; 4) geq 21}. Some properties of the graphs in $\mL(14; 1)$, $\mL(15; 1)$, and $\mL(16; 1)$ are given in Table \ref{table: mL(14; 1) properties}, Table \ref{table: mL(15; 1) properties}, and Table \ref{table: mL_(+K_3)(16; 1; 4) properties}.

It is true that

$G \arrowsv (3, 3) \Rightarrow K_1 + G \arrowse (3, 3)$, (\cite{Irw73}, see acknowledgments).

This fact is known as Posa's implication. Posa uses this implication to prove that $\mH_e(3, 3; 5) \neq \emptyset$ (unpublished). Irwing \cite{Irw73} uses the implication to prove the bound $\mH_e(3, 3; 5) \leq 18$. If additionally $\omega(G) = 3$, then $G \in \mL(n, 1)$. In \cite{PRU99} it is proved that if $G \in \mL(14, 1)$, then $G \arrowsv (3, 3)$. This result was used in \cite{RX07} to obtain the bound $F_e(3, 3; 4) \geq 19$. There exist, however, graphs $G$ in $\mL(15, 1)$ which do not have the property $G \arrowsv (3, 3)$. There are 20 such graphs and they are obtained in \cite{BN17}. Furthermore, these graphs do not have the property $G \arrowsv (2, 2, 3)$. This is one of the reasons why the method in the proof of $F_e(3, 3; 4) \geq 19$ in \cite{RX07} is inapplicable for proving $F_e(3, 3; 4) \geq n$, $n \geq 20$.

By Proposition \ref{proposition: K_1 + H arrowse (3, 3)}, if $G \in \mL(n, 0)$ and $A$ is an independent set of vertices of $G$, then $G - A \in \mL(n - |A|, 1)$. In \cite{BN17} we formulate without proof the following generalization of this fact:
\begin{proposition}
	\label{proposition: H in mL(n - abs(A), p + 1)}
	\cite{BN17}
	Let $G \in \mL(n; p)$, $A \subseteq \V(G)$ be an independent set of vertices of $G$ and $H = G - A$. Then $H \in \mL(n - \abs{A}; p + 1)$.
\end{proposition}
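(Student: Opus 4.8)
The plan is to reduce the statement to the already-proved Proposition~\ref{proposition: K_1 + H arrowse (3, 3)} by exploiting the associativity of the join operation. Two of the three requirements for membership in $\mL(n - \abs{A}; p + 1)$ are immediate: since $H = G - A$ is obtained by deleting $\abs{A}$ vertices, we have $\abs{\V(H)} = n - \abs{A}$; and since $H$ is an induced subgraph of $G$, any clique of $H$ is a clique of $G$, so $\omega(H) \leq \omega(G) < 4$. Thus the whole content lies in verifying the Ramsey condition $K_{p+1} + H \arrowse (3, 3)$.

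For this I would work inside the graph $K_p + G$, which arrows $(3, 3)$ by the hypothesis $G \in \mL(n; p)$. The key observation is that $A$ remains an independent set in $K_p + G$: the join adds only edges between $\V(K_p)$ and $\V(G)$, and no edges inside $\V(G)$, so the vertices of $A \subseteq \V(G)$ stay pairwise nonadjacent. Now I would apply Proposition~\ref{proposition: K_1 + H arrowse (3, 3)} with the role of its graph played by $K_p + G$ and its independent set played by $A$. This yields $K_1 + \bigl((K_p + G) - A\bigr) \arrowse (3, 3)$.

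It then remains to identify this graph with $K_{p+1} + H$. Deleting $A$ from $K_p + G$ affects only the $G$-part, so $(K_p + G) - A = K_p + (G - A) = K_p + H$; and because the join is associative with $K_1 + K_p = K_{p+1}$, we get $K_1 + (K_p + H) = (K_1 + K_p) + H = K_{p+1} + H$. Combining the previous display with these identities gives $K_{p+1} + H \arrowse (3, 3)$, completing all three requirements.

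Since the argument is essentially a single application of Proposition~\ref{proposition: K_1 + H arrowse (3, 3)}, there is no serious obstacle; the only points requiring care are the bookkeeping facts that $A$ is still independent after forming the join and that the two join identities $(K_p + G) - A = K_p + H$ and $K_1 + (K_p + H) = K_{p+1} + H$ hold on the nose. These are routine once one writes out the vertex and edge sets explicitly.
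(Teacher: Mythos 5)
Your proof is correct and follows essentially the same route as the paper: both apply Proposition~\ref{proposition: K_1 + H arrowse (3, 3)} to the graph $K_p + G$ with the independent set $A$ and then identify $K_1 + ((K_p + G) - A)$ with $K_{p+1} + H$ via the join identities. Your additional remarks (that $A$ stays independent in the join, and the explicit checks of the vertex count and $\omega(H) < 4$) are sound bookkeeping that the paper leaves implicit.
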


\begin{proof}
Since, $G \in \mL(n; p)$, $K_p + G \arrowse (3, 3)$. According to Proposition \ref{proposition: K_1 + H arrowse (3, 3)}, $K_1 + ((K_p + G) - A) \arrowse (3, 3)$. Since $(K_p + G) - A = K_p + (G - A) = K_p + H$ and $K_1 + (K_p + H) = K_{p + 1} + H$, we obtain $K_{p + 1} + H \arrowse (3, 3)$. Thus, $H \in \mL(n - \abs{A}; p + 1)$.
\end{proof}

We denote by $\mL_{max}(n; p; s)$ the set of all maximal $K_4$-free graphs in $\mL(n; p; s)$, i.e. the graphs $G \in \mL(n; p; s)$ for which $\omega(G + e) = 4$ for every $e \in \E(\overline{G})$.  

The graph $G$ is called a $(+K_3)$-graph if $G + e$ contains a new $3$-clique for every $e \in \E(\overline{G})$. Clearly, $G$ is a $(+K_3)$-graph if and only if $N(u) \cap N(v) = \emptyset$ for every pair of non-adjacent vertices $u$ and $v$ of $G$, i.e. either $G$ is a complete graph or the diameter of $G$ is equal to 2. The set of all $(+K_3)$-graphs in $\mL(n; p; s)$ is denoted by $\mL_{+K_3}(n; p; s)$. Obviously, $\mL_{max}(n; p; s) \subseteq \mL_{+K_3}(n; p; s)$.\\

\noindent For convenience, we will also use the following notations:

$\mL_{max}(n; p; \leq s) = \bigcup_{s' \leq s}\mL_{max}(n; p; s')$

$\mL_{+K_3}(n; p; \leq s) = \bigcup_{s' \leq s}\mL_{+K_3}(n; p; s')$\\

It is easy to see that if $G$ is a maximal $K_4$-free graph and $A$ is an independent set of vertices in $G$, then $G - A$ is a $(+K_3)$-graph. Because of this, regarding the graphs in $\mL_{max}(n; p; s)$, from Proposition \ref{proposition: H in mL(n - abs(A), p + 1)} it follows easily that

\begin{proposition}
	\label{proposition: H in mL_(+K_3)(n - s; p + 1; leq s)}
	\cite{BN17}
	Let $G \in \mL_{max}(n; p; s)$. Let $A \subseteq \V(G)$ be an independent set of vertices of $G$, $\abs{A} = s$ and $H = G - A$. Then,
	
	$H \in \mL_{+K_3}(n - s; p + 1; \leq s)$.
\end{proposition}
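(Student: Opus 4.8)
The plan is to verify, one at a time, the three conditions packaged into the conclusion $H \in \mL_{+K_3}(n - s; p + 1; \leq s)$: that $H$ belongs to $\mL(n - s; p + 1)$, that $\alpha(H) \leq s$, and that $H$ is a $(+K_3)$-graph. Two of these are immediate. Since $\mL_{max}(n; p; s) \subseteq \mL(n; p)$, we have $G \in \mL(n; p)$, and $A$ is an independent set with $\abs{A} = s$; Proposition \ref{proposition: H in mL(n - abs(A), p + 1)} then applies verbatim and gives $H = G - A \in \mL(n - s; p + 1)$, so in particular $\omega(H) < 4$ and $K_{p + 1} + H \arrowse (3, 3)$. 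For the independence number, $H$ is the subgraph of $G$ induced on $\V(G) \setminus A$, so every independent set of $H$ is also an independent set of $G$; hence $\alpha(H) \leq \alpha(G) = s$.

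The substance of the proof is to show that $H$ is a $(+K_3)$-graph, i.e. that any two non-adjacent vertices of $H$ have a common neighbor in $H$. I would take non-adjacent $u, v \in \V(H)$, which are non-adjacent in $G$ as well. Since $G$ is a maximal $K_4$-free graph, $\omega(G + [u, v]) = 4$, so the edge $[u, v]$ lies in a new $K_4$; its other two vertices $w_1, w_2$ are then common neighbors of $u$ and $v$ in $G$ that are adjacent to each other. Now I would use that $A$ is independent: the edge $[w_1, w_2]$ prevents both $w_1$ and $w_2$ from lying in $A$, so at least one of them, say $w_1$, is a vertex of $H = G - A$. Thus $w_1 \in N(u) \cap N(v)$ within $H$, so $H + [u, v]$ contains a new triangle. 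As $u, v$ were arbitrary, $H$ is a $(+K_3)$-graph, and combining the three facts yields $H \in \mL_{+K_3}(n - s; p + 1; \leq s)$.

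I do not expect a real obstacle here: the statement is essentially the composition of Proposition \ref{proposition: H in mL(n - abs(A), p + 1)} with the elementary observation, already recorded just before the proposition, that removing an independent set from a maximal $K_4$-free graph leaves a $(+K_3)$-graph. The only step demanding any care is the correct reading of maximality — converting $\omega(G + [u, v]) = 4$ into the existence of an \emph{adjacent} pair of common neighbors of $u$ and $v$ — after which independence of $A$ immediately keeps one of them inside $H$.
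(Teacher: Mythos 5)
Your proof is correct and follows exactly the route the paper indicates: it combines Proposition \ref{proposition: H in mL(n - abs(A), p + 1)} with the observation (stated without proof in the paper as ``easy to see'') that deleting an independent set from a maximal $K_4$-free graph leaves a $(+K_3)$-graph, and your maximality argument producing an adjacent pair of common neighbors, at most one of which can lie in the independent set $A$, is the right way to fill in that observation. The only remark worth adding is that you correctly read the $(+K_3)$ condition as $N(u)\cap N(v)\neq\emptyset$ for non-adjacent $u,v$, whereas the paper's text contains a typo stating $N(u)\cap N(v)=\emptyset$.
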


Further, the bound $F_e(3, 3; 4) \geq 21$ will be proved with the help of Algorithm \ref{algorithm: finding mL_(max)(n; p; s)} and Algorithm \ref{algorithm: finding mL_(max)(n; 0; s)}, which are based on Proposition \ref{proposition: H in mL_(+K_3)(n - s; p + 1; leq s)}.

\begin{definition}
\label{definition: Sperner graph}
The graph $G$ is called a Sperner graph if $N_G(u) \subseteq N_G(v)$ for some pair of vertices $u, v \in \V(G)$.
\end{definition}

Let $G \in \mL(n; p; s)$ and $N_G(u) \subseteq N_G(v)$. Then, $K_p + (G - u)$ is a homomorphic image of $K_p + G$ and by Proposition \ref{proposition: G' arrowse (3, 3)}, $K_p + (G - u) \arrowse (3, 3)$, i.e. $G - u \in \mL(n - 1; p; s')$, $s - 1 \leq s' \leq s$. Therefore, every Sperner graph $G \in \mL(n; p; s)$ is obtained by adding one new vertex to some graph $H \in \mL(n - 1; p; s')$, $s - 1 \leq s' \leq s$. In the special case, when $G$ is a Sperner graph and $G \in \mL_{max}(n; p; s)$, from $N_G(u) \subseteq N_G(v)$ it follows that $N_G(u) = N_G(v)$ and $G - u \in \mL_{max}(n - 1; p; s')$, $s - 1 \leq s' \leq s$. Hence, it is true

\begin{proposition}
\label{proposition: if G is a Sperner graph, then G is obtained by duplicating a vertex in H}
If $G \in \mL_{max}(n; p; s)$ is a Sperner graph, then $G$ is obtained by duplicating a vertex in some graph $H \in \mL_{max}(n - 1; p; s')$, $s - 1 \leq s' \leq s$.
\end{proposition}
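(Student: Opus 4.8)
The plan is to cash in the two assertions announced in the paragraph just before the statement: that for a maximal $K_4$-free Sperner graph the comparable neighborhoods in fact coincide, and that deleting the redundant vertex keeps us inside $\mL_{max}$. So fix a Sperner pair $u, v \in \V(G)$ with $N_G(u) \subseteq N_G(v)$. First I would record that $u$ and $v$ are non-adjacent: if $u \sim v$ then $v \in N_G(u) \subseteq N_G(v)$, impossible in a simple graph. Hence $u$ is a candidate non-adjacent twin of $v$, and the whole argument reduces to (i) upgrading $N_G(u) \subseteq N_G(v)$ to an equality, and (ii) checking that $H := G - u$ inherits maximality.

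For (i) I would argue by contradiction, combining $K_4$-freeness with maximality. Suppose some $w \in N_G(v) \setminus N_G(u)$ exists. Then $uw \in \E(\overline{G})$, so maximality of $G$ produces a $K_4$ in $G + uw$, which must use the new edge and hence has the form $\{u, w, x, y\}$. The two extra vertices lie in $N_G(u) \subseteq N_G(v)$, so $x, y \in N_G(v)$; together with $w \in N_G(v)$ and $x \sim y$ this makes $\{v, w, x, y\}$ a $K_4$ already present in $G$ (the four vertices are distinct because $x, y \in N_G(u)$ while $v \notin N_G(u)$, and $w \neq v$), contradicting $\omega(G) < 4$. Thus $N_G(u) = N_G(v)$, so $u \not\sim v$ are true non-adjacent twins and $G$ is exactly the duplication of $v$ in $H = G - u$.

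That $H \in \mL(n - 1; p; s')$ with $s - 1 \le s' \le s$ is the homomorphism argument from the preceding paragraph: the retraction fixing every vertex of $K_p + H$ and sending $u$ to $v$ is well defined precisely because $N_G(u) = N_G(v)$, it maps $K_p + G$ onto $K_p + H$, and Proposition \ref{proposition: G' arrowse (3, 3)} then gives $K_p + H \arrowse (3, 3)$; since $\omega(H) \le \omega(G) < 4$ we get $H \in \mL(n - 1; p)$, while deleting a single vertex changes $\alpha$ by at most one, pinning $s' = \alpha(H)$ to $\{s - 1, s\}$.

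The step I expect to demand the most care is (ii), maximality of $H$, since removing $u$ could a priori destroy the $K_4$ that certifies some non-edge of $G$. Here I would reroute through the twin $v$: given a non-edge $xy$ of $H$, maximality of $G$ supplies a $K_4$ on $\{x, y, a, b\}$ in $G + xy$; if it avoids $u$ it survives verbatim in $H + xy$, and if it uses $u$, say $a = u$, then replacing $u$ by $v$ repairs it, because $N_G(u) = N_G(v)$ forces $v$ adjacent to $x, y, b$, while $b \ne v$ (otherwise $uv$ would be an edge of $G$) and $x, y \ne v$ (as $x, y \in N_G(u)$ but $v \notin N_G(u)$), so $\{x, y, v, b\}$ is a genuine $K_4$ in $H + xy$. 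This confirms $H \in \mL_{max}(n - 1; p; s')$ and, together with the twin structure, shows $G$ is obtained by duplicating the vertex $v$ in $H$, completing the proof.
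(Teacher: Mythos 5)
Your proposal is correct and follows the same route the paper sketches in the paragraph preceding the statement: the homomorphism of $K_p+G$ onto $K_p+(G-u)$ collapsing $u$ onto $v$ gives the arrowing property via Proposition \ref{proposition: G' arrowse (3, 3)}, and maximality plus $K_4$-freeness upgrades $N_G(u)\subseteq N_G(v)$ to equality. The paper leaves the neighborhood-equality step and the maximality of $G-u$ as assertions, and your arguments for both (the $K_4$ on $\{v,w,x,y\}$, and rerouting a certifying $K_4$ through the twin $v$) correctly fill in exactly those details.
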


From (\ref{equation: chi(G) geq 6}) and $K_p + G \arrowse (3, 3)$ it follows that
\begin{equation}
\label{equation: chi(G) geq 6 - p}
G \in \mL(n; p) \Rightarrow \chi(G) \geq 6 - p.
\end{equation}

We will use this fact in Algorithm \ref{algorithm: finding mL_(max)(n; p; s)}.

\section{Proof of Theorem \ref{theorem: alpha(G) leq n - 16}}

\begin{definition}
\label{definition: B(H)}
For every graph $H$ denote by $\mM(H)$ the set of all maximal $K_3$-free subsets of $\V(H)$. Let
$$\mM(H) = \{M_1, ..., M_k\}.$$
We denote by $\B(H)$ the graph which is obtained by adding to $H$ new independent vertices $u_1, ..., u_k$ and new edges incident to $u_1, ..., u_k$ such that
$$N_{\B(H)}(u_i) = M_i, \ i = 1, ..., k.$$
\end{definition}

\begin{lemma}
\label{lemma: If G arrowse (3, 3), then B(H) arrowse (3, 3)}
Let $G$ be a graph, $\omega(G) = 3$, $A$ be an independent set of vertices of $G$, and $H = G - A$. If $G \arrowse (3, 3)$, then $\B(H) \arrowse (3, 3)$. 
\end{lemma}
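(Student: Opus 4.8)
The plan is to construct a homomorphism $G \overset{\phi}{\rightarrow} \B(H)$ and then invoke Proposition~\ref{proposition: G' arrowse (3, 3)}, which guarantees that a homomorphic image of a $(3,3)$-Ramsey graph is again a $(3,3)$-Ramsey graph. The graph $\B(H)$ of Definition~\ref{definition: B(H)} is built precisely so that such a map exists: its new vertices $u_1, \dots, u_k$ have neighborhoods equal to the maximal $K_3$-free subsets $M_1, \dots, M_k$ of $\V(H)$, and these are exactly the sets large enough to absorb the neighborhoods of the deleted vertices of $A$.

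The key observation I would isolate first is that for each $a \in A$ the neighborhood $N_G(a)$ is a $K_3$-free subset of $\V(H)$. Since $A$ is independent, every neighbor of $a$ lies in $\V(G) \setminus A = \V(H)$, so $N_G(a) \subseteq \V(H)$; and if $N_G(a)$ induced a triangle, that triangle together with $a$ would be a $K_4$ in $G$, contradicting $\omega(G) = 3$. Because $N_G(a)$ is $K_3$-free, it is contained in some maximal $K_3$-free set $M_{i(a)} \in \mM(H)$, that is, $N_G(a) \subseteq M_{i(a)} = N_{\B(H)}(u_{i(a)})$. This absorption step is the heart of the argument and is exactly where the hypothesis $\omega(G)=3$ is used.

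I would then define $\phi$ by setting $\phi(v) = v$ for $v \in \V(H)$ and $\phi(a) = u_{i(a)}$ for $a \in A$, and verify it is a homomorphism by a short case analysis on the three types of edges of $G$. Edges lying inside $H$ are fixed by the identity and hence preserved; an edge $[a, x]$ with $a \in A$ and $x \in \V(H)$ satisfies $x \in N_G(a) \subseteq M_{i(a)}$, so $[u_{i(a)}, x] \in \E(\B(H))$; and there are no edges inside $A$ since $A$ is independent. With $\phi$ established, applying Proposition~\ref{proposition: G' arrowse (3, 3)} to the hypothesis $G \arrowse (3, 3)$ immediately yields $\B(H) \arrowse (3, 3)$.

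I do not expect a substantive obstacle here: once the absorption observation is in place, the remainder is routine bookkeeping. The only points requiring care are confirming that $\mM(H)$ is nonempty so that each $a$ has a target $u_{i(a)}$ (which holds since singletons, and hence maximal $K_3$-free sets, exist whenever $H$ is nonempty), and checking that $\phi$ sends each edge of $G$ to a genuine edge of $\B(H)$ with distinct endpoints, as required for Proposition~\ref{proposition: G' arrowse (3, 3)} to apply.
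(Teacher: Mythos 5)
Your proof is correct, and it is a cleaner packaging of the same core idea as the paper's. The shared heart of both arguments is your ``absorption'' observation: since $A$ is independent and $\omega(G)=3$, each $N_G(a)$ is a $K_3$-free subset of $\V(H)$ and hence lies inside some maximal $K_3$-free set $M_{i(a)}$, i.e.\ inside $N_{\B(H)}(u_{i(a)})$. Where you differ is in how this is converted into the conclusion. The paper first builds an intermediate supergraph $\widetilde{G}$ of $G$ in which each $v_i\in A$ has its neighborhood enlarged to exactly $M_{j_i}$, notes that a supergraph of a $(3,3)$-Ramsey graph is again one, and then --- because several vertices of $A$ may have been assigned the same $M_j$ while $\B(H)$ contains only one vertex $u_j$ per maximal set --- iteratively identifies duplicated vertices via Proposition~\ref{proposition: G' arrowse (3, 3)} until the result embeds as a subgraph of $\B(H)$. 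You instead define a single homomorphism $G\to\B(H)$ (identity on $H$, $a\mapsto u_{i(a)}$ on $A$) and apply Proposition~\ref{proposition: G' arrowse (3, 3)} once; the duplication issue disappears automatically because a homomorphism is free to send several vertices of $A$ to the same $u_i$, and no edge is collapsed since $A$ is independent. Your route buys brevity and avoids the multiset bookkeeping that occupies most of the paper's proof; the paper's route makes the containment $\widetilde{\widetilde{G}}\subseteq\B(H)$ explicit, which is perhaps closer in spirit to how $\B(H)$ is actually used computationally. Your edge-case remarks (nonemptiness of $\mM(H)$, no collapsed edges) are the right ones to check and are handled correctly.
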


\begin{proof}
Let $\mM(H) = \{M_1, ..., M_k\}$ be the same as in Definition \ref{definition: B(H)} and $A = \{v_1, ..., v_s\}$. Let $v_i \in A$. Then, $N_G(v_i) \subseteq M_j$ for some $j \in \{1, ..., k\}$. Let $j_i$ be the smallest index for which $N_G(v_i) \subseteq M_{j_i}$. We define a supergraph $\widetilde{G}$ of $G$ in the following way: for each $v_i \in A$ we add to $\E(G)$ the new edges $[v_i, u], u \in M_{j_i} \setminus N_G(v_i)$. Clearly, $\V(\widetilde{G}) = \V(G)$, $A$ is an independent set of vertices of $\widetilde{G}$, $\widetilde{G} - A = H$ and
$$N_{\widetilde{G}}(v_i) \in \mM(H), i = 1, ..., s.$$
Since $\widetilde{G}$ is a subgraph of $G$, it follows
\begin{equation}
\label{equation: widetilde(G) arrowse (3, 3)}
\widetilde{G} \arrowse (3, 3).
\end{equation}

If $\{N_{\widetilde{G}}(v_1), ..., N_{\widetilde{G}}(v_s)\}$ is a subset of $\mM(H)$, then $\widetilde{G}$ is a subgraph of $\B(H)$ and from (\ref{equation: widetilde(G) arrowse (3, 3)}) it follows that $\B(H) \arrowse (3, 3)$.

Let $\{N_{\widetilde{G}}(v_1), ..., N_{\widetilde{G}}(v_s)\}$ be a multiset and $N_{\widetilde{G}}(v_i) = N_{\widetilde{G}}(v_j), \ i < j$. Let $\widetilde{G'}$ = $\widetilde{G} - v_j$. Obviously, $\widetilde{G'}$ is a homomorphic image of $G$. Therefore, from Proposition \ref{proposition: G' arrowse (3, 3)} and (\ref{equation: widetilde(G) arrowse (3, 3)}) it follows that $\widetilde{G'} \arrowse (3, 3)$.

If in $\{N_{\widetilde{G'}}(v_i) | i = 1, ... s, i \neq j\}$ there is also a duplication, then in the same way we remove from $\widetilde{G'}$ one of the duplicating vertices among $v_1, ..., v_{i - 1}, v_{i + 1}, ..., v_s$ and we obtain a graph $\widetilde{G''}$ such that $\widetilde{G''} \arrowse (3, 3)$.

In the end, a graph $\widetilde{\widetilde{G}}$ is reached such that $H = \widetilde{\widetilde{G}} - A'$, where $A' \subseteq A$, $\{N_{\widetilde{\widetilde{G}}}(v) | v \in A'\}$ is a subset of $\mM(H)$, and $\widetilde{\widetilde{G}} \arrowse (3, 3)$. Since $\widetilde{\widetilde{G}}$ is a subgraph of $\B(H)$, it follows that $\B(H) \arrowse (3, 3)$.
\end{proof}

\textit{Proof of }\textbf{Theorem \ref{theorem: alpha(G) leq n - 16}.} Suppose the opposite is true, i.e. $\alpha(G) \geq n - 15$. Let $A = \{v_1, ..., v_{n - 15}\}$ be an independent set of vertices of $G$, and $H = G - A$. Then, from Lemma \ref{lemma: If G arrowse (3, 3), then B(H) arrowse (3, 3)} it follows $\B(H) \arrowse (3, 3)$.

According to Proposition \ref{proposition: H in mL(n - abs(A), p + 1)} ($p = 0$), $H \in \mL(15; 1)$. All 2081234 graphs in $\mL(15; 1)$ were obtained in (\cite{BN17}, Remark 4.4). With a computer we check that $\B(H) \not\arrowse (3, 3)$ for all $H \in \mL(15, 1)$. This contradiction proves the theorem.
\qed\\

\begin{corollary}
\label{corollary: F_e(3, 3; 4) geq 20}
\cite{BN17}
$F_e(3, 3; 4) \geq 20$.
\end{corollary}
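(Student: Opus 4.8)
The plan is to derive the corollary from Theorem~\ref{theorem: alpha(G) leq n - 16} by pairing its upper bound on $\alpha(G)$ with classical Ramsey-type lower bounds that hold because every graph in $\mH_e(3, 3; 4)$ is $K_4$-free. Let $G \in \mH_e(3, 3; 4; n)$; it suffices to show $n \geq 20$. Theorem~\ref{theorem: alpha(G) leq n - 16} supplies $\alpha(G) \leq n - 16$, while $\omega(G) < 4$ means $G$ contains no $K_4$, so by the definition of the Ramsey numbers any such $G$ on at least $R(4, t)$ vertices must satisfy $\alpha(G) \geq t$ (a clique of size $4$ being excluded).

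I would then run a short bootstrapping argument, repeatedly feeding the current lower bound on $n$ into a Ramsey inequality to improve the lower bound on $\alpha(G)$, and feeding that back through Theorem~\ref{theorem: alpha(G) leq n - 16}. Since $\alpha(G) \geq 1$, the theorem already gives $n - 16 \geq 1$, i.e. $n \geq 17$. As $n \geq 17 \geq R(4, 3) = 9$ and $G$ is $K_4$-free, we get $\alpha(G) \geq 3$, whence $n - 16 \geq 3$ and $n \geq 19$. Now $n \geq 19 \geq R(4, 4) = 18$, so the $K_4$-freeness of $G$ forces $\alpha(G) \geq 4$; one final application of Theorem~\ref{theorem: alpha(G) leq n - 16} gives $n - 16 \geq 4$, that is $n \geq 20$, as required.

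Since this is a corollary, essentially all of the difficulty is already absorbed into Theorem~\ref{theorem: alpha(G) leq n - 16}; the remaining step is genuinely routine, and its only delicate point is arranging the bootstrapping so that the running lower bound on $n$ actually reaches the threshold $R(4, 4) = 18$ needed to conclude $\alpha(G) \geq 4$. Alternatively, one could shortcut the first two iterations by invoking the previously known bound $F_e(3, 3; 4) \geq 19$ (Radziszowski and Xu \cite{RX07}), which immediately yields $n \geq 19 \geq R(4, 4)$ and hence $\alpha(G) \geq 4$ and $n \geq 20$; I would keep the self-contained bootstrapping version so that the corollary rests only on Theorem~\ref{theorem: alpha(G) leq n - 16} together with the standard values of the small Ramsey numbers $R(4, 3) = 9$ and $R(4, 4) = 18$.
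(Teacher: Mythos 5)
Your proposal is correct and is essentially the paper's argument: the key step in both is to play the bound $\alpha(G) \leq n - 16$ from Theorem~\ref{theorem: alpha(G) leq n - 16} against the fact that a $K_4$-free graph on at least $R(4,4) = 18$ vertices has $\alpha(G) \geq 4$. The only difference is cosmetic: the paper treats just the critical case $n = 19$ (the smaller cases being covered by the previously known bound $F_e(3,3;4) \geq 19$), whereas your bootstrap makes the corollary self-contained; both are fine.
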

\begin{proof}
Suppose that $G$ is a 19-vertex $(3, 3)$-Ramsey graph and $\omega(G) = 3$. From Theorem \ref{theorem: alpha(G) leq n - 16} it follows that $\alpha(G) \leq 3$. This contradicts with the equality $R(4, 4) = 18$.
\end{proof}

The graphs $\B(H), H \in \mL(15; 1)$, in the proof of Theorem \ref{theorem: alpha(G) leq n - 16} have between 50 and 210 vertices. We used the {\em zchaff} SAT solver \cite{zchaff} to prove that these graphs are not $(3, 3)$-Ramsey graphs. The problem of determining if a graph $G$ is a $(3, 3)$-Ramsey graph can be transformed into a problem of satisfiability of a boolean formula $\phi_G$ in conjunctive normal form with $|\E(G)|$ variables. Let $e_1, ..., e_{|\E(G)|}$ be the edges of $G$ and $x_1, ..., x_{|\E(G)|}$ be the corresponding boolean variables in $\phi_G$. For every triangle in $G$ formed by the edges $e_i$$e_j$$e_k$, we add two clauses to $\phi_G$
$$(x_i \lor x_j \lor x_k) \land (\overline{x_i} \lor \overline{x_j} \lor \overline{x_k}).$$
It is easy to see that $G \arrowse (3, 3)$ if and only if $\phi_G$ is satisfiable.

Even though the  graphs $\B(H), H \in \mL(15; 1)$, have up to 210 vertices, SAT solvers are able to solve the resulting boolean formulas in a short amount of time. There exist smaller graphs $G$ for which it is difficult to determine if $G \arrowse (3, 3)$. For example, Exoo conjectured that the 127-vertex graph $G_{127}$, used by Hill and Irwing \cite{HI82} to prove the bound $R(4, 4, 4) \geq 128$, has the property $G_{127} \arrowse (3, 3)$. This conjecture was studied in \cite{RX07} and \cite{KWR18}. It is still unknown whether $G_{127} \arrowse (3, 3)$.

\section{Proof of Theorem \ref{theorem: F_e(3, 3; 4) geq 21}}

According to Proposition \ref{proposition: if G is a Sperner graph, then G is obtained by duplicating a vertex in H}, all Sperner graphs in $\mL_{max}(n; p; s)$ can be obtained easily by duplicating a vertex in graphs in $\mL_{max}(n - 1; p; s'), s -1 \leq s' \leq s$. By Proposition \ref{proposition: H in mL_(+K_3)(n - s; p + 1; leq s)}, the non-Sperner graphs in $\mL_{max}(n; p; s)$ are obtained by adding $s$ independent vertices to some graphs in $\mL_{+K_3}(n - s; p + 1; \leq s)$. This is realized with the help of the following algorithm:

\begin{algorithm}
	\label{algorithm: finding mL_(max)(n; p; s)}
	\cite{BN17}
	Finding all non-Sperner graphs in $\mL_{max}(n; p; s)$ for fixed $n$, $p$ and $s$.
	
	\emph{1.} The input of the algorithm is the set $\mathcal{A} = \mL_{+K_3}(n - s; p + 1; \leq s)$. The output will be the set $\mathcal{B}$ of all non-Sperner graphs in $\mL_{max}(n; p; s)$. Initially, set $\mathcal{B} = \emptyset$.
	
	\emph{2.} For each graph $H \in \mathcal{A}$:
	
	\emph{2.1.} Find the family $\mathcal{M}(H) = \set{M_1, ..., M_t}$ of all maximal $K_3$-free subsets of $\V(H)$.
	
	\emph{2.2.} Find all $s$-element subsets $N = \set{M_{i_1}, M_{i_2}, ..., M_{i_s}}$ of $\mathcal{M}(H)$, which fulfill the conditions:
	
	(a) $M_{i_j} \neq N_H(v)$ for every $v \in \V(H)$ and for every $M_{i_j} \in N$.
	
	(b) $K_2 \subseteq M_{i_j} \cap M_{i_k}$ for every $M_{i_j}, M_{i_k} \in N$.
	
	(c) $\alpha(H - \bigcup_{M_{i_j} \in N'} M_{i_j}) \leq s - \abs{N'}$ for every $N' \subseteq N$.
	
	\emph{2.3.} For each of the found in 2.2 $s$-element subsets $N = \set{M_{i_1}, M_{i_2}, ..., M_{i_s}}$ of $\mathcal{M}(H)$ construct the graph $G = G(N)$ by adding new independent vertices $v_1, v_2, ..., v_s$ to $\V(H)$ such that $N_G(v_j) = M_{i_j}, j = 1, ..., s$. If $G$ is not a Sperner graph and $\omega(G + e) = 4, \forall e \in \E(\overline{G})$, then add $G$ to $\mathcal{B}$.
	
	\emph{3.} Remove the isomorph copies of graphs from $\mathcal{B}$.
	
	\emph{4.} Remove from $\mathcal{B}$ all graphs with chromatic number less than $6 - p$.
	
	\emph{5.} Remove from $\mathcal{B}$ all graphs $G$ for which $K_p + G \not\arrowse (3, 3)$.
\end{algorithm}

\begin{theorem}
	\label{theorem: finding mL_(max)(n; p; s)}
	\cite{BN17}
	After the execution of Algorithm \ref{algorithm: finding mL_(max)(n; p; s)}, the obtained set $\mathcal{B}$ coincides with the set of all non-Sperner graphs in $\mL_{max}(n; p; s)$.
\end{theorem}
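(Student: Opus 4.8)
The plan is to establish the two inclusions separately: \emph{soundness}, that every graph surviving in $\mathcal{B}$ after Step 5 is a non-Sperner graph in $\mL_{max}(n; p; s)$, and \emph{completeness}, that every non-Sperner graph in $\mL_{max}(n; p; s)$ is produced in Step 2.3 and survives Steps 3--5. Throughout, equality of the two sets is understood up to isomorphism, which is exactly what Step 3 enforces. The backbone of both inclusions is a correspondence between a non-Sperner graph $G \in \mL_{max}(n; p; s)$ and a pair $(H, N)$, where $H = G - A$ for a maximum independent set $A$ of $G$ and $N$ is the collection of neighborhoods $\set{N_G(v) : v \in A}$, each viewed as a subset of $\V(H)$.

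For completeness, I would start from a non-Sperner $G \in \mL_{max}(n; p; s)$, fix a maximum independent set $A = \set{v_1, \dots, v_s}$, and set $H = G - A$. Proposition \ref{proposition: H in mL_(+K_3)(n - s; p + 1; leq s)} gives $H \in \mL_{+K_3}(n - s; p + 1; \leq s) = \mathcal{A}$, so $H$ is among the graphs processed in Step 2. The first structural claim is that each $M_{i_j} := N_G(v_j)$ is a \emph{maximal} $K_3$-free subset of $\V(H)$: it is $K_3$-free because $v_j$ lies in no $K_4$, and it is maximal because, $G$ being maximal $K_4$-free, adding the edge $[v_j, w]$ for any $w \in \V(H) \setminus M_{i_j}$ creates a $K_4$ through $v_j$ and $w$, hence a triangle inside $M_{i_j} \cup \set{w}$. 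Thus $N = \set{M_{i_1}, \dots, M_{i_s}} \subseteq \mM(H)$. I would then verify conditions (a)--(c): the $M_{i_j}$ are pairwise distinct and none equals some $N_H(w)$, since any such coincidence would give $N_G(v_j) \subseteq N_G(w)$ and make $G$ a Sperner graph, establishing (a) together with $\abs{N} = s$; condition (b) holds because $v_j$ and $v_k$ are non-adjacent, so maximal $K_4$-freeness forces a $K_4$ on $v_j, v_k$ and hence an edge inside $M_{i_j} \cap M_{i_k}$; and (c) follows from the independence-number computation below together with $\alpha(G) = s$. Since the reconstruction $G(N)$ in Step 2.3 returns a graph isomorphic to $G$, and $G$ is non-Sperner with $\omega(G + e) = 4$ for every $e \in \E(\overline{G})$, it is added to $\mathcal{B}$; it then survives Step 4 by (\ref{equation: chi(G) geq 6 - p}) and Step 5 because $K_p + G \arrowse (3, 3)$.

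For soundness, I would take any $G = G(N)$ that survives all steps. Step 2.3 guarantees that $G$ is non-Sperner, maximal $K_4$-free, and has $n$ vertices; $G$ is $K_4$-free because $H$ is $K_4$-free and the added vertices are independent with $K_3$-free neighborhoods; Step 5 guarantees $K_p + G \arrowse (3, 3)$. It remains to see that $\alpha(G) = s$, which is where the central computation enters. For any independent set $I$ of $G$, writing $S = I \cap \set{v_1, \dots, v_s}$ and $N' = \set{M_{i_j} : v_j \in S}$, the part $I \cap \V(H)$ is independent in $H$ and disjoint from $\bigcup_{M \in N'} M$, so
\begin{equation*}
\alpha(G) = \max_{N' \subseteq N}\left(\abs{N'} + \alpha\left(H - \bigcup_{M \in N'} M\right)\right).
\end{equation*}
The lower bound $\alpha(G) \geq s$ is witnessed by $A$, while condition (c) says precisely that every summand is at most $s$; hence $\alpha(G) = s$ and $G \in \mL_{max}(n; p; s)$.

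I expect the main obstacle to be pinning down this last equivalence rigorously, namely that condition (c) ranging over \emph{all} subsets $N' \subseteq N$ is exactly what forces $\alpha(G(N)) = s$, and dually that it is automatically satisfied by the $N$ arising from a genuine $G$. A second point requiring care is that the maximality of $G(N)$ cannot be read off from the $(+K_3)$-property of $H$ alone: for a non-edge lying entirely inside $H$, that property only yields a new triangle, not a new $K_4$, so the explicit test $\omega(G + e) = 4$ in Step 2.3 is genuinely needed and must be invoked rather than inferred. Once these two issues are settled, conditions (a) and (b) serve only to restrict the search in Step 2.2 without discarding any valid graph, and the two inclusions combine to give the claimed equality.
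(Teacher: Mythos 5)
Your proposal is correct and follows essentially the same route as the paper, which does not reprove the theorem but defers to \cite{BN17} and only sketches the key points: condition (a) corresponds to the non-Sperner requirement, (b) to maximality, (c) to forcing $\alpha(G(N)) = s$ via exactly the independence-set decomposition you write out, and step 4 to (\ref{equation: chi(G) geq 6 - p}). Your write-up is a complete, correctly argued expansion of that sketch, including the two points the paper flags (that (c) over all $N' \subseteq N$ is what pins down the independence number, and that the explicit maximality test in step 2.3 is still needed).
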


The correctness of Algorithm \ref{algorithm: finding mL_(max)(n; p; s)} is guarantied by the proof of Theorem \ref{theorem: finding mL_(max)(n; p; s)} in \cite{BN17}. Here we will only note some details. The condition (a) has to be satisfied since $G = G(N)$ is not a Sperner graph. The condition (b) follows from the maximality of $G = G(N)$. Even if both conditions (a) and (b) are satisfied, additional checks in step 2.3 are still needed to make sure that only maximal non-Sperner graphs are added to $\mB$. From condition (c) it follows that only graphs with independence number $s$ are added to $\mB$. If $N' = \emptyset$, then (c) clearly holds, since $\alpha(H) \leq s$. If $|N'| = 1$, then for each added vertex $v_j$ it is guarantied that $v_j$ does not form an independent set with $s$ vertices of $H$. If $|N'| = 2$, then for every two added vertices $v_j, v_k$ it is guarantied that $v_j$ and $v_k$ do not form an independent set with $(s - 1)$ vertices of $H$, etc. The graphs in $\mB$ must satisfy the condition in step 4 according to (\ref{equation: chi(G) geq 6 - p}).\\

According to Corollary \ref{corollary: F_e(3, 3; 4) geq 20}, the graphs in $\mL(20; 0)$ are vertex critical, and by (\ref{equation: delta(G) geq 8}), these graphs have minimal degree greater than or equal to 8. Because of this, we modify Algorithm \ref{algorithm: finding mL_(max)(n; p; s)} in the case $p = 0$ in such a way that only graphs $G$ with $\delta(G) \geq 8$ are added to $\mB$:

\begin{algorithm}
	\label{algorithm: finding mL_(max)(n; 0; s)}
	\cite{BN17}
	Optimization of Algorithm \ref{algorithm: finding mL_(max)(n; p; s)} for finding all non-Sperner graphs $G \in \mL_{max}(n; 0; s)$ with $\delta(G) \geq 8$.

1. In step 1 we remove from the set $\mathcal{A}$ the graphs with minimum degree less than 8 - s.

2. In step 2.2 we add the following conditions for the subset $N$:

(d) $\abs{M_{i_j}} \geq 8$ for every $M_{i_j} \in N$.

(e) If $N' \subseteq N$, then $d_H(v) \geq 8 - s + \abs{N'}$ for every $v \not\in \bigcup_{M_{i_j} \in N'} M_{i_j}$.

\end{algorithm}

\vspace{2em}

\textit{Proof of }\textbf{Theorem \ref{theorem: F_e(3, 3; 4) geq 21}.} Suppose the opposite is true and let $G$ be a $20$-vertex maximal $(3, 3)$-Ramsey graph with $\omega(G) = 3$. From Theorem \ref{theorem: alpha(G) leq n - 16} it follows that $\alpha(G) \leq 4$. Now, from $R(4, 4) = 18$ it follows that $\alpha(G) = 4$. Therefore, it is enough to prove that $\mL_{max}(20; 0; 4) = \emptyset$. First, we will successively obtain all graphs in the sets $\mL_{+K_3}(8; 3; \leq 4)$, $\mL_{+K_3}(12; 2; \leq 4)$, and $\mL_{+K_3}(16; 1; \leq 4)$.\\

\textit{Obtaining all graphs in $\mL_{+K_3}(8; 3; \leq 4)$:}

We use the {\em geng} tool included in the {\em nauty} programs \cite{MP13} to generate All non-isomorphic graphs of order 8 are generated using the {\em geng} tool included in the {\em nauty} programs \cite{MP13}. Among them we find all 1178 graphs in $\mL_{+K_3}(8; 3; \leq 4)$ (see Table \ref{table: mL_(+K_3)(8; 3; leq 4) properties}).\\

\textit{Obtaining all graphs in $\mL_{+K_3}(12; 2; \leq 4)$:}

From $R(3, 4) = 9$ it follows that $\mL(12; 2; \leq 2) = \emptyset$. All 1449166 12-vertex graphs $G$ with $\omega(G) < 4$ and $\alpha(G) < 4$ are known and available on \cite{McK_r}. Among them there are 321 graphs in $\mL_{max}(12; 2; 3)$. We use {\em geng} to generate all non-isomorphic graphs of order 11. Among them we find all 372 graphs in $\mL_{max}(11; 2; \leq 4)$.  According to Proposition \ref{proposition: if G is a Sperner graph, then G is obtained by duplicating a vertex in H}, all Sperner graphs in $\mL_{max}(12; 2; 4)$ are obtained by duplicating a vertex in some of the graphs in $\mL_{max}(11; 2; \leq 4)$. This way, we find all 1341 Sperner graphs in $\mL_{max}(12; 2; 4)$. We execute Algorithm \ref{algorithm: finding mL_(max)(n; p; s)} ($n = 12, p = 2, s = 4$) with input the set $\mA = \mL_{+K_3}(8; 3; \leq 4)$ to output all 815 non-Sperner graphs in $\mL_{max}(12; 2; 4)$. Thus, $|\mL_{max}(12; 2; \leq 4)| = 2477$. By removing edges from the graphs in $\mL_{max}(12; 2; \leq 4)$ we find all 539410034 graphs in $\mL_{+K_3}(12; 2; \leq 4)$ (see Table \ref{table: mL_(+K_3)(12; 2; leq 4) properties}).\\

\textit{Obtaining all graphs in $\mL_{+K_3}(16; 1; \leq 4)$:}

From $R(3, 4) = 9$ it follows that $\mL(16; 1; \leq 2) = \emptyset$. There are only two 16-vertex graphs $G$ such that $\omega(G) < 4$ and $\alpha(G) < 4$ \cite{McK_r}. We check with a computer that none of them belongs to $\mL(16; 1)$, and therefore $\mL(16; 1; 3) = \emptyset$. Thus, $\mL(16; 1; \leq 4) = \mL(16; 1; 4)$. All 5772 graphs in $\mL_{max}(15; 1; \leq 4)$ were obtained in part 1 of the proof of the Main Theorem in \cite{BN17}. According to Proposition \ref{proposition: if G is a Sperner graph, then G is obtained by duplicating a vertex in H}, all Sperner graphs in $\mL_{max}(16; 1; 4)$ are obtained by duplicating a vertex in some of the graphs in $\mL_{max}(15; 1; \leq 4)$. This way, we find all 21749 Sperner graphs in $\mL_{max}(16; 1; 4)$. We execute Algorithm \ref{algorithm: finding mL_(max)(n; p; s)} ($n = 16, p = 1, s = 4$) with input the set $\mA = \mL_{+K_3}(12; 2; \leq 4)$ to output all 1676267 non-Sperner graphs in $\mL_{max}(16; 1; 4)$. Thus, $|\mL_{max}(16; 1; \leq 4)| = 1698016$. By removing edges from the graphs in $\mL_{max}(16; 1; \leq 4)$ we find all 3892126874 graphs in $\mL_{+K_3}(16; 1; \leq 4)$ (see Table \ref{table: mL_(+K_3)(16; 1; 4) properties}).\\

\textit{Proving that $\mL_{max}(20; 0; 4) = \emptyset$:}

We execute Algorithm \ref{algorithm: finding mL_(max)(n; 0; s)} ($n = 20, p = 0, s = 4$) with input the set $\mA = \mL_{+K_3}(16; 1; \leq 4)$. After the completion of step 4, 19803568 graphs remain in the set $\mB$ (see Table \ref{table: 20-vertex 6-chromatic properties}). None of these graphs satisfies the condition in step 5, hence $\mB = \emptyset$. We obtained that there are no non-Sperner graphs in $\mL_{max}(20; 0; 4)$ with minimum degree greater than or equal to 8. According to Corollary \ref{corollary: F_e(3, 3; 4) geq 20}, all graphs in $\mL_{max}(20; 0; 4)$ must be vertex critical. Therefore, there are no Sperner graphs in $\mL_{max}(20; 0; 4)$, and by (\ref{equation: delta(G) geq 8}), no graphs with minimum degree less than 8. We proved that $\mL_{max}(20; 0; 4) = \emptyset$, which finishes the proof.	
\qed\\

Some properties of the graphs in $\mL_{+K_3}(8; 3; \leq 4)$, $\mL_{+K_3}(12; 2; \leq 4)$, and $\mL_{+K_3}(16; 1; 4)$ are given in Table \ref{table: mL_(+K_3)(8; 3; leq 4) properties}, Table \ref{table: mL_(+K_3)(12; 2; leq 4) properties}, and Table \ref{table: mL_(+K_3)(16; 1; 4) properties}. Properties of the 20-vertex graphs obtained after the completion of step 4 of Algorithm \ref{algorithm: finding mL_(max)(n; 0; s)} $(n = 20, p = 0, s = 4)$ are given in Table \ref{table: 20-vertex 6-chromatic properties}.\\

All computations were performed on a personal computer. The most time consuming part of the proof was obtaining all graphs in $\mL_{+K_3}(16; 1; 4)$ by removing edges from the graphs in $\mL_{max}(16; 1; 4)$, which took about 4 months. After that, executing Algorithm \ref{algorithm: finding mL_(max)(n; 0; s)} $(n = 20, p = 0, s = 4)$ with input the graphs in $\mL_{+K_3}(16; 1; 4)$ was done in under 2 months.

In order to check the correctness of our computer programs implementing Algorithm \ref{algorithm: finding mL_(max)(n; p; s)}, we reproduced the 153 graphs in $\mL(14; 1)$, which were first obtained in \cite{PRU99} in a different way. Among the graphs in $\mL(14; 1)$ there are 8 maximal graphs, all of which have independence number 4, i.e. $|\mL_{max}(14; 1)| = 8$ and $\mL_{max}(14; 1) = \mL_{max}(14; 1; 4)$. Using {\em nauty} we obtained all 547524 graphs in $\mL_{+K_3}(10; 2; \leq 4)$. By executing Algorithm \ref{algorithm: finding mL_(max)(n; p; s)} ($n = 14, p = 1, s = 4$) with input $\mA = \mL_{+K_3}(10; 2; \leq 4)$ we found all 8 graphs in $\mL_{max}(14; 1; 4)$. By removing edges from the graphs in $\mL_{max}(14; 1)$ we obtained the 153 graphs in $\mL(14; 1)$.

\section{Concluding remarks}

In this section we consider the possibilities for improving the inequality $F_e(3, 3; 4) \geq 21$. We suppose the following conjecture is true:
\begin{conjecture}
	\label{conjecture: alpha(G) geq 5}
	$\min\{\alpha(G) : G \in \mH_e(3, 3; 4)\} \geq 5$.
\end{conjecture}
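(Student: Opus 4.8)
The plan is to reduce Conjecture \ref{conjecture: alpha(G) geq 5} to a finite family of computations of exactly the same type already carried out in the proof of Theorem \ref{theorem: F_e(3, 3; 4) geq 21}. Suppose, for contradiction, that some $G \in \mH_e(3, 3; 4)$ has $\alpha(G) \leq 4$. Adding edges to $G$ cannot destroy the property $G \arrowse (3, 3)$ and cannot increase $\alpha$, so I may pass to an edge-maximal $K_4$-free supergraph and assume $G \in \mL_{max}(n; 0; s)$ with $s = \alpha(G) \leq 4$. If $s \leq 3$, then $\omega(G) \leq 3$ together with $R(4, 4) = 18$ forces $n \leq 17$, contradicting the bound $F_e(3, 3; 4) \geq 21$ of Theorem \ref{theorem: F_e(3, 3; 4) geq 21}; hence $s = 4$. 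Using $\omega(G) \leq 3$, $\alpha(G) = 4$ and the classical value $R(4, 5) = 25$ gives $n \leq 24$, while Theorem \ref{theorem: F_e(3, 3; 4) geq 21} gives $n \geq 21$. Thus the conjecture is equivalent to the assertion
\[
\mL_{max}(n; 0; 4) = \emptyset \quad \text{for } n \in \{21, 22, 23, 24\}.
\]

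For each such $n$ I would repeat the inductive, algorithm-driven scheme of the proof of Theorem \ref{theorem: F_e(3, 3; 4) geq 21}. By Proposition \ref{proposition: H in mL_(+K_3)(n - s; p + 1; leq s)}, removing an independent $4$-set from a maximal graph drops the vertex count by $4$ and raises $p$ by $1$, so I would build the chain of sets $\mL_{+K_3}(n - 12; 3; \leq 4)$, $\mL_{+K_3}(n - 8; 2; \leq 4)$, $\mL_{+K_3}(n - 4; 1; \leq 4)$ bottom-up, starting (exactly as in the $n = 20$ case) from the small graphs on $n - 12 \in \{9, 10, 11, 12\}$ vertices produced by {\em geng}, and then apply Algorithm \ref{algorithm: finding mL_(max)(n; 0; s)} with $s = 4$ and input $\mL_{+K_3}(n - 4; 1; \leq 4)$. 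The minimum-degree restriction $\delta \geq 8$ from (\ref{equation: delta(G) geq 8}) (valid since every graph in $\mL_{max}(n; 0; 4)$ is vertex-critical by Corollary \ref{corollary: F_e(3, 3; 4) geq 20}) and the chromatic bound (\ref{equation: chi(G) geq 6 - p}) would again be used to prune, and the surviving candidates would be tested for $G \arrowse (3, 3)$ via the SAT reduction used for Theorem \ref{theorem: alpha(G) leq n - 16}. Showing that the output set is empty in all four cases would complete the proof.

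The hard part will be purely computational feasibility. Already the case $n = 20$ required enumerating the roughly $3.9 \times 10^9$ graphs in $\mL_{+K_3}(16; 1; \leq 4)$ and several months of computation, and the intermediate sets $\mL_{+K_3}(17; 1; \leq 4)$, $\mL_{+K_3}(18; 1; \leq 4)$, $\mL_{+K_3}(19; 1; \leq 4)$ and $\mL_{+K_3}(20; 1; \leq 4)$ needed here are expected to be larger by several orders of magnitude, since the number of $K_4$-free graphs with independence number at most $4$ grows very quickly with $n$ and the edge-removal step that generates all $(+K_3)$-graphs is the dominant bottleneck. I therefore expect that a direct extension of the present method is out of reach without substantially stronger pruning — for instance sharper lower bounds on $\delta$ or $\chi$ specific to the regime $\alpha = 4$, exploitation of the extra structure forced by $\alpha = 4$ on so few vertices, or an analogue of Lemma \ref{lemma: If G arrowse (3, 3), then B(H) arrowse (3, 3)} yielding a cheaper non-Ramsey certificate — or else a genuinely non-enumerative, structural argument.
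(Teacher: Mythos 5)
The statement you were asked about is a \emph{conjecture}: the paper does not prove it, and neither do you. Your submission is, by its own admission, a reduction plus a research plan whose decisive computational steps you expect to be infeasible, so it cannot be accepted as a proof. That said, the reduction itself is sound and coincides with what the paper records in its concluding remarks: passing to a maximal $K_4$-free supergraph, ruling out $\alpha \leq 3$ via $R(4,4)=18$ and $F_e(3,3;4)\geq 21$, and bounding $n \leq 24$ via $R(4,5)=25$ does reduce the conjecture to showing $\mL_{max}(n;0;4)=\emptyset$ for $n\in\{21,22,23,24\}$. Where you diverge from the paper is the case $n=24$: the paper settles it at essentially no cost by taking the complete catalogue of $24$-vertex graphs with $\omega=3$ and $\alpha=4$ from \cite{AM18} and testing each for the arrowing property directly, rather than rebuilding them bottom-up through $\mL_{+K_3}(12;3;\leq 4)$, $\mL_{+K_3}(16;2;\leq 4)$, $\mL_{+K_3}(20;1;\leq 4)$. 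This leaves exactly $n=21,22,23$ open, which is what the paper states; for those values no complete catalogue of Ramsey $(4,5)$-graphs exists, so some version of your enumerative scheme (or a genuinely new idea) would indeed be required.

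One concrete logical gap in your outline: you justify the pruning condition $\delta(G)\geq 8$ by asserting that every graph in $\mL_{max}(n;0;4)$ is vertex-critical ``by Corollary \ref{corollary: F_e(3, 3; 4) geq 20}.'' That corollary (and even Theorem \ref{theorem: F_e(3, 3; 4) geq 21}) only forces vertex-criticality for $n\leq 21$; a $23$-vertex graph in $\mH_e(3,3;4)$ could a priori contain a $22$-vertex subgraph that still arrows. The repair is to process $n=21,22,23,24$ in increasing order and observe that for $G$ with $\alpha(G)\leq 4$ every $G-v$ also has $\alpha(G-v)\leq 4$ and $\omega(G-v)\leq 3$, so once $\mL(n';0;\leq 4)=\emptyset$ is established for all $21\leq n'<n$, any counterexample on $n$ vertices is automatically vertex-critical and (\ref{equation: delta(G) geq 8}) applies. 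Without this inductive ordering the minimum-degree and non-Sperner reductions you rely on are not justified for $n\geq 22$.
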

If $G \in \mH_e(3, 3; 4; n)$, $n \geq 25$, according to the equality $R(4, 5) = 25$ we have $\alpha(G) \geq 5$. All 24-vertex graphs with independence number 4 and clique number 3 are obtained in \cite{AM18}. With the help of a computer we check that none of these graphs belongs to $\mH_e(3, 3; 4)$. This way, we proved that if $G \in \mH_e(3, 3; 4; n)$, $n \geq 24$, then $\alpha(G) \geq 5$. To prove the conjecture it remains to consider the cases $n = 21$, $22$, and $23$.

By similar reasoning as in the proof of Theorem \ref{theorem: alpha(G) leq n - 16}, but with more calculations, it could be proved that
$$\alpha(G) \leq n - 17.$$
From this inequality and Conjecture \ref{conjecture: alpha(G) geq 5} it follows that $F_e(3, 3; 4) \geq 22$.


\vspace{8em}
{ ACKNOWLEDGMENTS}\\

We would like to thank Professor Stanis{\l}aw Radziszowski for his useful remarks and suggestions which led to the improvement of this paper.

The first author was supported by the National program "Young scientists and Postdoctoral candidates" of the Ministry of Education and Science.

\clearpage

\clearpage

\begin{table}
	\centering
	\resizebox{0.7\textwidth}{!}
	{
		\begin{tabular}{ | l r | l r | l r | l r | l r | l r | }
			\hline
			\multicolumn{2}{|c|}{\hbox to 2.5cm{$|\E(G)|$ \hfill $\#$}}&\multicolumn{2}{|c|}{\hbox to 2.5cm{$\delta(G)$ \hfill $\#$}}&\multicolumn{2}{|c|}{\hbox to 2.5cm{$\Delta(G)$ \hfill $\#$}}& \multicolumn{2}{|c|}{\hbox to 2.5cm{$\alpha(G)$ \hfill $\#$}}\\
			\hline
			42			&  1		& 4			& 91		& 7			& 3			& 4			& 111		\\
			43			&  2		& 5			& 58		& 8			& 90		& 5			& 39		\\
			44			&  7		& 6			& 4			& 10		& 60		& 6			& 2			\\
			45			&  20		& 			&			& 			& 			& 7			& 1			\\
			46			&  37		& 			& 			& 			& 			& 			& 			\\
			47			&  45		& 			& 			& 			& 			& 			& 			\\
			48			&  28		& 			& 			& 			& 			& 			& 			\\
			49			&  11		& 			& 			& 			& 			& 			& 			\\
			50			&  2		& 			& 			& 			& 			& 			& 			\\
			\hline
		\end{tabular}
	}
	\caption{Some properties of the graphs in $\mL(14; 1)$ obtained in \cite{PRU99}}
	\label{table: mL(14; 1) properties}
\vspace{4em}
	\centering
	\resizebox{0.7\textwidth}{!}
	{
		\begin{tabular}{ | l r | l r | l r | l r | l r | l r | }
			\hline
			\multicolumn{2}{|c|}{\hbox to 2.5cm{$|\E(G)|$ \hfill $\#$}}&\multicolumn{2}{|c|}{\hbox to 2.5cm{$\delta(G)$ \hfill $\#$}}&\multicolumn{2}{|c|}{\hbox to 2.5cm{$\Delta(G)$ \hfill $\#$}}& \multicolumn{2}{|c|}{\hbox to 2.5cm{$\alpha(G)$ \hfill $\#$}}\\
			\hline
			42			&  1		& 0			& 153		& 7			& 65		& 3			& 5			\\
			43			&  4		& 1			& 1 629		& 8			& 675 118	& 4			& 1 300 452	\\
			44			&  44		& 2			& 10 039	& 9			& 1 159 910	& 5			& 747 383	\\
			45			&  334		& 3			& 34 921	& 10		& 165 612	& 6			& 32 618	\\
			46			&  2 109	& 4			& 649 579	& 11		& 80 529	& 7			& 766		\\
			47			&  9 863	& 5			& 1 038 937	& 			& 			& 8			& 10		\\
			48			&  35 812	& 6			& 339 395	& 			& 			& 			& 			\\
			49			&  101 468	& 7			& 6 581		& 			& 			& 			& 			\\
			50			&  223 881	& 			& 			& 			& 			& 			& 			\\
			51			&  378 614	& 			& 			& 			& 			& 			& 			\\
			52			&  478 582	& 			& 			& 			& 			& 			& 			\\
			53			&  436 693	& 			& 			& 			& 			& 			& 			\\
			54			&  273 824	& 			& 			& 			& 			& 			& 			\\
			55			&  110 592	& 			& 			& 			& 			& 			& 			\\
			56			&  26 099	& 			& 			& 			& 			& 			& 			\\
			57			&  3 150	& 			& 			& 			& 			& 			& 			\\
			58			&  160		& 			& 			& 			& 			& 			& 			\\
			59			&  4		& 			& 			& 			& 			& 			& 			\\
			\hline
		\end{tabular}
	}
	\caption{Some properties of the graphs in $\mL(15; 1)$ obtained in \cite{BN17}}
	\label{table: mL(15; 1) properties}
\end{table}

\clearpage

\begin{table}
	\centering
	\resizebox{0.7\textwidth}{!}
	{
		\begin{tabular}{ | l r | l r | l r | l r | l r | l r | }
			\hline
			\multicolumn{2}{|c|}{\hbox to 2.5cm{$|\E(G)|$ \hfill $\#$}}&\multicolumn{2}{|c|}{\hbox to 2.5cm{$\delta(G)$ \hfill $\#$}}&\multicolumn{2}{|c|}{\hbox to 2.5cm{$\Delta(G)$ \hfill $\#$}}& \multicolumn{2}{|c|}{\hbox to 2.5cm{$\alpha(G)$ \hfill $\#$}}\\
			\hline
			10			&  1		& 1			& 15		& 3			& 2			& 2			& 3			\\
			11			&  3		& 2			& 552		& 4			& 108		& 3			& 705		\\
			12			&  28		& 3			& 560		& 5			& 610		& 4			& 470		\\
			13			&  114		& 4			& 49		& 6			& 387		& 			& 			\\
			14			&  258		& 5			& 2			& 7			& 71		& 			& 			\\
			15			&  328		& 			& 			& 			& 			& 			& 			\\
			16			&  253		& 			& 			& 			& 			& 			& 			\\
			17			&  127		& 			& 			& 			& 			& 			& 			\\
			18			&  47		& 			& 			& 			& 			& 			& 			\\
			19			&  14		& 			& 			& 			& 			& 			& 			\\
			20			&  4		& 			& 			& 			& 			& 			& 			\\
			21			&  1		& 			& 			& 			& 			& 			& 			\\
			\hline
		\end{tabular}
	}
	\caption{Some properties of the graphs in $\mL(8; 3; \leq 4)$}
	\label{table: mL_(+K_3)(8; 3; leq 4) properties}
	\vspace{4em}
	\centering
	\resizebox{0.7\textwidth}{!}
	{
		\begin{tabular}{ | l r | l r | l r | l r | l r | l r | }
			\hline
			\multicolumn{2}{|c|}{\hbox to 2.5cm{$|\E(G)|$ \hfill $\#$}}&\multicolumn{2}{|c|}{\hbox to 2.5cm{$\delta(G)$ \hfill $\#$}}&\multicolumn{2}{|c|}{\hbox to 2.5cm{$\Delta(G)$ \hfill $\#$}}& \multicolumn{2}{|c|}{\hbox to 2.5cm{$\alpha(G)$ \hfill $\#$}}\\
			\hline
			23			&  5		& 2			& 3271422	& 5			& 449820	& 3			& 1217871	\\
			24			&  231		& 3			& 200573349	& 6			& 90348516	& 4			& 538192163	\\
			25			&  10970	& 4			& 317244496	& 7			& 326214208	& 			& 			\\
			26			&  254789	& 5			& 18296860	& 8			& 113842493	& 			& 			\\
			27			&  2675686	& 6			& 23902		& 9			& 8451810	& 			&			\\
			28			&  14355266	& 7			& 5			& 10		& 103082	& 			& 			\\
			29			&  44690777	& 			& 			& 11		& 105		& 			& 			\\
			30			&  88716906	& 			& 			& 			& 			& 			& 			\\
			31			&  119843548& 			& 			& 			& 			& 			& 			\\
			32			&  115345475& 			& 			& 			& 			& 			& 			\\
			33			&  81922759	& 			& 			& 			& 			& 			& 			\\
			34			&  44228481	& 			& 			& 			& 			& 			& 			\\
			35			&  18667991	& 			& 			& 			& 			& 			& 			\\
			36			&  6345554	& 			& 			& 			& 			& 			& 			\\
			37			&  1795212	& 			& 			& 			& 			& 			& 			\\
			38			&  437931	& 			& 			& 			& 			& 			& 			\\
			39			&  95241	& 			& 			& 			& 			& 			& 			\\
			40			&  18959	& 			& 			& 			& 			& 			& 			\\
			41			&  3517		& 			& 			& 			& 			& 			& 			\\
			42			&  617		& 			& 			& 			& 			& 			& 			\\
			43			&  101		& 			& 			& 			& 			& 			& 			\\
			44			&  16		& 			& 			& 			& 			& 			& 			\\
			45			&  2		& 			& 			& 			& 			& 			& 			\\
			\hline
		\end{tabular}
	}
	\caption{Some properties of the graphs in $\mL(12; 2; \leq 4)$}
	\label{table: mL_(+K_3)(12; 2; leq 4) properties}
\end{table}

\clearpage

\begin{table}[!h]
	\centering
	\resizebox{0.7\textwidth}{!}
	{
		\begin{tabular}{ | l r | l r | l r | l r | l r | l r | }
			\hline
			\multicolumn{2}{|c|}{\hbox to 3.5cm{$|\E(G)|$ \hfill $\#$}}&\multicolumn{2}{|c|}{\hbox to 3.5cm{$\delta(G)$ \hfill $\#$}}&\multicolumn{2}{|c|}{\hbox to 3.5cm{$\Delta(G)$ \hfill $\#$}}\\
			\hline
			48			& 1 		& 3			& 2782333	& 7			& 426		\\
			49			& 41 		& 4			& 248294425	& 8			& 269602932	\\
			50			& 1263 		& 5			& 1961917314& 9			& 3080309372\\
			51			& 24897 	& 6			& 1627736506& 10		& 535664232	\\
			52			& 340818 	& 7			& 51394620	& 11		& 6544240	\\
			53			& 3215961 	& 8			& 1676		& 12		& 5672		\\
			54			& 20943254 	& 			& 			& 			& 			\\
			55			& 94567255 	& 			& 			& 			& 			\\
			56			& 295234663 & 			& 			& 			& 			\\
			57			& 632937375 & 			& 			& 			& 			\\
			58			& 926347803 & 			& 			& 			& 			\\
			59			& 921306723 & 			& 			& 			& 			\\
			60			& 619034510 & 			& 			& 			& 			\\
			61			& 278204812 & 			& 			& 			& 			\\
			62			& 82280578 	& 			& 			& 			& 			\\
			63			& 15662269 	& 			& 			& 			& 			\\
			64			& 1876177 	& 			& 			& 			& 			\\
			65			& 141052 	& 			& 			& 			& 			\\
			66			& 7088 		& 			& 			& 			& 			\\
			67			& 314 		& 			& 			& 			& 			\\
			68			& 18		& 			& 			& 			& 			\\
			69			& 2 		& 			& 			& 			& 			\\
			\hline
		\end{tabular}
	}
	\caption{Some properties of the graphs in $\mL_{+K_3}(16; 1; 4)$}
	\label{table: mL_(+K_3)(16; 1; 4) properties}
\vspace{4em}
	\centering
	\resizebox{0.7\textwidth}{!}
	{
		\begin{tabular}{ | l r | l r | l r | l r | l r | l r | }
			\hline
			\multicolumn{2}{|c|}{\hbox to 3.5cm{$|\E(G)|$ \hfill $\#$}}&\multicolumn{2}{|c|}{\hbox to 3.5cm{$\delta(G)$ \hfill $\#$}}&\multicolumn{2}{|c|}{\hbox to 3.5cm{$\Delta(G)$ \hfill $\#$}}\\
			\hline
			86 & 317	& 8			& 19599716	& 9			& 35		\\
			87 & 8539	& 9			& 203852	& 10		& 6072772	\\
			88 & 94179	& 			& 			& 11		& 13316933	\\
			89 & 480821	& 			& 			& 12		& 411501	\\
			90 & 1574738	&		& 			& 13		& 2327		\\
			91 & 3492540	&		& 			& 			& 			\\
			92 & 5122647	&		& 			& 			& 			\\
			93 & 4864736	&		& 			& 			& 			\\
			94 & 2923601	&		& 			& 			& 			\\
			95 & 1026658	&		& 			& 			& 			\\
			96 & 194534	& 			& 			& 			& 			\\
			97 & 18960	& 			& 			& 			& 			\\
			98 & 1272	& 			& 			& 			& 			\\
			99 & 25		& 			& 			& 			& 			\\
			100& 1		& 			& 			& 			& 			\\
			\hline
		\end{tabular}
	}
	\caption{Some properties of the 20-vertex graphs obtained after the completion of step 4 of Algorithm \ref{algorithm: finding mL_(max)(n; 0; s)} $(n = 20, p = 0, s = 4)$}
	\label{table: 20-vertex 6-chromatic properties}
\end{table}

\clearpage


\clearpage


\begin{thebibliography}{10}
	
	\bibitem{AM18}
	V.~Angeltveit and B.~McKay
	\newblock {$R(5, 5) \leq 48$}.
	\newblock {\em Journal of Graph Theory}, 89(1):5-–13, 2018.
	
	\bibitem{Bik18}
	A.~Bikov.
	\newblock {Computation and bounding of Folkman numbers}.
	\newblock PhD Thesis. Sofia University "St. Kliment Ohridski", October 2018.
	\newblock Preprint: arxiv:1806.09601, June 2018
	
	\bibitem{Bik16}
	A.~Bikov.
	\newblock {Small minimal $(3, 3)$-Ramsey graphs}.
	\newblock {\em Ann. Univ. Sofia Fac. Math. Inform.}, 103:123-–147, 2016.
	\newblock Preprint: arxiv:1604.03716, April 2016.
	
	\bibitem{BN17}
	A.~Bikov and N.~Nenov.
	\newblock {The edge Folkman number $F_e(3, 3; 4)$ is greater than 19}.
	\newblock {Geombinatorics}, 27(1):5–-14, 2017.
	\newblock  Preprint: arxiv:1609.03468, September 2016.
	
	\bibitem{DR08}
	A.~Dudek and V.~R\"odl.
	\newblock {On the {F}olkman Number $f(2, 3, 4)$}.
	\newblock {\em Experimental Mathematics}, 17:63--67, 2008.
	
	\bibitem{Erd75}
	P.~Erd\H{o}s.
	\newblock {Problems and results on finite and infinite graphs}.
	\newblock {Recent Advances in Graph Theory \em Proc. Second Czechoslovak Sympos.}, Prague, 1974, 183--192, Academia, Prague, 1975.
	
	\bibitem{EH67}
	P.~Erd\H{o}s and A.~Hajnal.
	\newblock {Research problem 2-5}.
	\newblock {\em J. Combin. Theory}, 2:104, 1967.
	
	\bibitem{Fol70}
	J.~Folkman.
	\newblock {Graphs with monochromatic complete subgraph in every edge coloring}.
	\newblock {\em SIAM J. Appl. Math.}, 18:19--24, 1970.
	
	\bibitem{FR86}
	P.~Frankl and V.~R\"odl.
	\newblock {Large triangle-free subgraphs in graphs without $K_4$}.
	\newblock {\em Graphs and Combinatorics}, 2:135--144, 1986.
	
	\bibitem{Gra68}
	R.~L. Graham.
	\newblock {On edgewise 2-colored graphs with monochromatic triangles containing
		no complete hexagon}.
	\newblock {\em J. Combin. Theory}, 4:300, 1968.
	
	\bibitem{Gra12}
	R.~L. Graham.
	\newblock {Some Graph Theory Problems I Would Like to See Solved}.
	\newblock {\em SIAM My Favorite Graph Theory Conjectures}, Halifax, 2012.

	\bibitem{HI82}
	R.~Hill. and R.W.~Irwing.
	\newblock {On group partitions associated with lower bounds for symmetric Ramsey numbers.}.
	\newblock {\em European Journal of Combinatorics}, 3:35--50, 1982.

	\bibitem{Irw73}
	R.W.~Irwing.
	\newblock {On a bound of Graham and Spencer for a graph coloring constant.}.
	\newblock {\em Journal of Combinatorial Theory}, (B)15:200--203, 1973.

	\bibitem{KWR18}
	J. Kaufmann, H. Wickus, and S. Radziszowski.
	\newblock {On Some Edge Folkman Numbers Large and Small}.
	\newblock To appear in {\em Involve}.
	\newblock Preprint https://www.cs.rit.edu/~spr/PUBL/kwr18.pdf
	
	\bibitem{LRX14}
	A.~Lange, S.~Radziszowski, and X.~Xu.
	\newblock {Use of MAX-CUT for Ramsey Arrowing of Triangles}.
	\newblock {\em Journal of Comb. Math. and Comb. Comp.}, 88:61-71, 2014.
	
	\bibitem{Lin72}
	S.~Lin.
	\newblock {On Ramsey numbers and $K_r$-coloring of graphs}.
	\newblock {\em J. Combin. Theory Ser. B}, 12:82--92, 1972.
	
	\bibitem{Lu08}
	L.~Lu.
	\newblock {Explicit Construction of Small Folkman Graphs}.
	\newblock {\em SIAM J. on Discrete Math.}, 21:1053--1060, 2008.
	
	\bibitem{LRU01}
	T.~{\L}uczak, A.~Ruci{\'n}ski, and S.~Urba\'{n}ski
	\newblock {On minimal vertex Folkman graphs}.
	\newblock {\em Discrete Mathematics}, 236:245--262, 2001.
	
	\bibitem{McK_r}
	B.D.~McKay.
	\newblock \url{http://cs.anu.edu.au/~bdm/data/ramsey.html}.
	
	\bibitem{MP13}
	B.~D. McKay and A.~Piperino.
	\newblock Practical graph isomorphism, {II}.
	\newblock {\em J. Symbolic Computation}, 60:94--112, 2013.
	\newblock Preprint version at \href{http://arxiv.org/abs/1301.1493}{arxiv.org}.
	
	\bibitem{HN79}
	N.~Hadziivanov and N.~Nenov.
	\newblock {On the Graham-Spencer number (in Russian)}.
	\newblock {\em C. R. Acad. Bulg. Sci.}, 32:155--158, 1979.
	
	\bibitem{Nen81}
	N.~Nenov.
	\newblock {An example of a 15-vertex Ramsey (3, 3)-graph with clique number 4.
		(in Russian)}.
	\newblock {\em C. R. Acad. Bulg. Sci.}, 34:1487--1489, 1981.
	
	\bibitem{Nen83}
	N.~Nenov.
	\newblock {On the {Z}ykov numbers and some its applications to {R}amsey theory.
		(in Russian)}.
	\newblock {\em Serdica Bulg. math. publ.}, 9:161--167, 1983.
	
	\bibitem{Nen84}
	N.~Nenov.
	\newblock {The chromatic number of any 10-vertex graph without 4-cliques is at most 4.
		(in Russian)}.
	\newblock {\em C. R. Acad. Bulg. Sci.}, 37:301--304, 1984.
	
	\bibitem{PRU99}
	K.~Piwakowski, S.~Radziszowski, and S.~Urba\'{n}ski.
	\newblock {Computation of the Folkman number $F_e(3, 3; 5)$}.
	\newblock {\em J. Graph Theory}, 32:41--49, 1999.
	
	\bibitem{Rad14}
	S.~Radziszowski.
	\newblock {Small {R}amsey numbers}.
	\newblock {\em The Electronic Journal of Combinatorics}, Dynamic Survey
	revision 14, January 12 2014.
	
	\bibitem{RX07}
	S.~Radziszowski and X.~Xu.
	\newblock {On the Most Wanted Folkman Graph}.
	\newblock {\em Geombinatorics}, XVI(4):367--381, 2007.
	
	\bibitem{Soi08}
	A.~Soifer.
	\newblock {\em {The Mathematical Coloring Book}}.
	\newblock Springer, 2008.
	
	\bibitem{Spe88}
	J.~Spencer.
	\newblock {\em {Three hundred million points suffice}}.
	\newblock {\em J. Combin. Theory Ser. A}, 49:210--217, 1998.
	\newblock Also see erratum by M.~Hovey in 50:323.
	
	\bibitem{zchaff}
	{\em zchaff}, SAT Research Group, Princeton University.
	\newblock \url{https://www.princeton.edu/~chaff/zchaff.html}.
	
\end{thebibliography}
\end{document}